\newtheorem{Theorem}{Theorem}[section]
\newtheorem{Lemma}[Theorem]{Lemma}
\newtheorem{Proposition}[Theorem]{Proposition}
\newtheorem{Corollary}[Theorem]{Corollary}
\newtheorem{Remark}[Theorem]{Remark}
\newtheorem{Definition}[Theorem]{Definition}
\newtheorem*{Theorem A}{Theorem A}
\newtheorem*{Corollary B}{Corollary B}
\newtheorem*{Theorem C}{Theorem C}
\newcommand*{\overbar}[1]{\mkern 1.5mu\overline{\mkern-1.5mu#1\mkern-1.5mu}\mkern 1.5mu}
\begin{document}
\author{Charlie Beil}
 \address{Institut f\"ur Mathematik und Wissenschaftliches Rechnen, Universit\"at Graz, Heinrichstrasse 36, 8010 Graz, Austria.}
 \email{charles.beil@uni-graz.at}
\title[Nonnoetherian singularities and their noncommutative blowups]{Nonnoetherian singularities and their noncommutative blowups}
 \keywords{Non-noetherian ring, foundations of algebraic geometry, noncommutative algebraic geometry, noncommutative blowup, noncommutative desingularization.}
 \subjclass[2010]{14A20,14A22,13C15,16S50}
 \date{}

\begin{abstract}
We establish a new fundamental class of varieties in nonnoetherian algebraic geometry related to the central geometry of dimer algebras.
Specifically, given an affine algebraic variety $X$ and a finite collection of non-intersecting positive dimensional algebraic sets $Y_i \subset X$, we construct a nonnoetherian coordinate ring whose variety coincides with $X$ except that each $Y_i$ is identified as a distinct positive dimensional closed point.
We then show that the noncommutative blowup of such a singularity is a noncommutative desingularization, in a suitable geometric sense. 
\end{abstract}

\maketitle

\tableofcontents

\section{Introduction}

The primary objectives of this article are (i) to extend the framework of depictions, introduced in \cite{B4}, to a much larger class of varieties with nonnoetherian coordinate rings; and (ii) to show that noncommutative blowups of these varieties are noncommutative desingularizations, in a suitable sense. 
This framework was originally developed to provide the geometric tools needed to understand the representation theory of a class of quiver algebras called non-cancellative dimer algebras (e.g., \cite{B2,B3,B5}).
Dimer algebras arose in string theory \cite{HK,FHMSVW}, and have found wide application to many areas of mathematics (e.g., \cite{BKM, Br, FHKV, He, IN, IU, MR}). 
Depictions have enabled various notions in noncommutative algebraic geometry--such as noncommutative crepant resolutions \cite{V}, homological homogeneity \cite{BH}, and Azumaya loci--to be generalized to tiled matrix algebras that are not finitely generated modules over their centers \cite{B2,B3}; we will consider some of these generalizations here.
The underlying ideas of nonnoetherian algebraic geometry also suggest possible directions towards a new theory of quantum gravity \cite{B1,B6}.

Throughout, let $k$ be an algebraically closed field, and let $R$ be a subalgebra of an affine coordinate ring $S$ over $k$.
It is generally believed that nonnoetherian algebras do not admit concrete geometric descriptions.  
For example, consider the subalgebras of the polynomial rings $S_1 = k[x,y]$ and $S_2 = k[x,y,z]$,
\begin{align*}
R_1 & = k[x] + x(x-1)(x-2)S_1,\\
R_2 & = k[x^2 - y - z^2] + (x^2 - y, z-5)(x-z,y)S_2.
\end{align*}
We may ask, informally, what their maximal spectra $\operatorname{Max}R$ `look like', but such a question initially appears hopeless, at least in terms of geometries we can visualize.

We could instead consider the simpler subalgebras
\begin{align*}
R_1' & = k + x(x-1)(x-2)S_1,\\
R_2' & = k + (x^2 - y, z-5)(x-z,y)S_2.
\end{align*}
Both are of the form $R = k + I$, where $I$ is an ideal of $S$.
A geometric description of such subalgebras was introduced in \cite{B4}: the maximal spectrum $\operatorname{Max}R$ of $R$ coincides with the algebraic variety $\operatorname{Max}S$, except that the zero locus $\mathcal{Z}(I) \subset \operatorname{Max}S$  is identified as a single `smeared-out' point. 

In particular, we may view the variety $\operatorname{Max}R_1'$ as $\mathbb{A}_k^2$, with the union of the three lines
\begin{equation} \label{3 lines}
\mathcal{Z}(x) = \{ x = 0 \}, \ \ \ \ \mathcal{Z}(x-1) = \{ x = 1\}, \ \ \ \ \mathcal{Z}(x-2) = \{ x = 2\},
\end{equation}
identified as a single 1-dimensional point.
Similarly, we may view the variety $\operatorname{Max}R_2'$ as $\mathbb{A}_k^3$, with the union of the two curves
\begin{equation} \label{two curves}
\mathcal{Z}(x^2 - y, z-5) \ \ \ \ \text{ and } \ \ \ \ \mathcal{Z}(x-z,y)
\end{equation}
identified as a single 1-dimensional point.

These geometric pictures are made precise using depictions and geometric dimension.
A \textit{depiction} of a nonnoetherian domain $R$ is a finitely generated $k$-algebra $S$ that is as close to $R$ as possible, in a suitable geometric sense (Definition \ref{def dep}).
In particular, if $R$ is depicted by $S$, then $R$ and $S$ have equal Krull dimension, and their maximal spectra are birationally equivalent \cite[Theorem 2.5]{B4}.
Furthermore, the locus where $R$ and $S$ locally coincide,
\begin{equation*} \label{wheel}
U_{S/R} := \left\{ \mathfrak{n} \in \operatorname{Max}S \ | \ R_{\mathfrak{n} \cap R} = S_{\mathfrak{n}} \right\},
\end{equation*}
is open dense in $\operatorname{Max}S$ \cite[Proposition 2.4]{B4}. 

Algebras of the form $R = k + I$, with $\operatorname{dim}S/I \geq 1$, comprise an elementary class of examples in nonnoetherian algebraic geometry.
Two ideals $I_1, I_2 \subset S$ are said to be coprime if $I_1 + I_2 = S$; equivalently, their zero loci in $\operatorname{Max}S$ do not intersect, 
$$\mathcal{Z}(I_1) \cap \mathcal{Z}(I_2) = \emptyset.$$
In this article, we consider the question: \textit{given a collection of pairwise coprime ideals
$$I_1, \ldots, I_n \subset S,$$
is there a nonnoetherian ring $R$ for which $\operatorname{Max}R$ coincides with $\operatorname{Max}S$, except that each $\mathcal{Z}(I_i)$ is identified as a distinct closed point of $\operatorname{Max}R$?}
We will show that this question has a positive answer, with $R$ given by the intersection
$$R = \cap_i \left( k + I_i \right).$$

Our first main theorem is the following.

\begin{Theorem A} \label{main} (Propositions \ref{star}, \ref{dim SI2}, and Theorem \ref{main'}.)
Let $X$ be an affine algebraic variety over $k$ with coordinate ring $S$.
Consider a collection of pairwise non-intersecting algebraic sets $Y_1, \ldots, Y_n$ of $X$, where each ideal $I(Y_i)$ is proper, nonzero, and non-maximal.
Then the maximal spectrum of the ring
\begin{equation} \label{wait}
R := \cap_i (k + I(Y_i))
\end{equation}
coincides with $X$ except that each $Y_i$ is identified as a distinct closed point.
In particular, the locus $U_{S/R} \subset X$ is given by the intersection of the complements $Y_i^c$,
$$U_{S/R} = \cap_i Y_i^c.$$
Furthermore, we have:
\begin{itemize}
 \item[(i)] $R$ is nonnoetherian if and only if there is some $i$ for which $\operatorname{dim}Y_i \geq 1$.
 \item[(ii)] $R$ is depicted by $S$ if and only if for each $i$, $\operatorname{dim}Y_i \geq 1$.
\end{itemize}
\end{Theorem A}

Theorem A answers our initial question in a surprisingly simple way: observe that the subalgebras $R_1$ and $R_2$ are of the form (\ref{wait}):
\begin{align*}
R_1 & = k[x] + x(x-1)(x-2)S_1 \\
& = (k + xS_1) \cap (k + (x-1)S_1) \cap (k + (x-2)S_1),
\end{align*}
and 
\begin{align*}
R_2 & = k[x^2 - y - z^2] + (x^2 - y, z-5)(x-z,y)S_2 \\
& = (k + (x^2 - y, z - 5)S_2) \cap (k + (x - z, y)S_2).
\end{align*}
The variety $\operatorname{Max}R_1$ therefore looks exactly like $\mathbb{A}^2_k$, except that each of the three lines in (\ref{3 lines}) is identified as a distinct $1$-dimensional point.
Similarly, $\operatorname{Max}R_2$ looks exactly like $\mathbb{A}_k^3$, except that each of the curves in (\ref{two curves}) is identified as a distinct $1$-dimensional point.

To note, it is peculiar that by adjoining to $R'_2$ the polynomial $x^2 - y - z^2$,
$$R_2 = R'_2[x^2 - y - z^2],$$
the single $1$-dimensional point of $\operatorname{Max}R'_2$ separates into two distinct $1$-dimensional points, while all other points of $\operatorname{Max}R'_2$ are left unchanged.

Theorem A also implies the following generalization of the fact that, given any maximal ideal $\mathfrak{n}$ of $S$, $S$ decomposes as the sum $S = k + \mathfrak{n}$.

\begin{Corollary B} \label{Earth}
Let $I$ be a proper non-maximal nonzero radical ideal of $S$, and set $R = k + I$.
The following are equivalent:
\begin{itemize}
 \item[(i)] $\operatorname{dim}S/I \geq 1$.
 \item[(ii)] $R$ is nonnoetherian.
 \item[(iii)] $R$ is depicted by $S$.
\end{itemize}
\end{Corollary B}

In particular, $R = k + I$ is noetherian if and only if $\operatorname{dim}S/I = 0$, that is,
$$I = \mathfrak{n}_1 \cap \cdots \cap \mathfrak{n}_{\ell}$$ 
for some maximal ideals $\mathfrak{n}_1, \ldots, \mathfrak{n}_{\ell} \in \operatorname{Max}S$.
The implication (ii) $\Rightarrow$ (i) was also shown by Stafford in \cite[Lemma 1.4]{St} using different methods.

In Section \ref{sheaves}, we define a sheaf of depictions on an affine scheme $X$ to be a sheaf of algebras that is a depiction on each principal open set of $X$.
We show that the sheafification of a depiction $S$ of $R$ is a sheaf of depictions on $\operatorname{Spec}R$.

In Section \ref{nbons}, we consider nonnoetherian coordinate rings in the setting of noncommutative algebraic geometry.
Let $S$ be a finite type normal integral domain, let $Y_1, \ldots, Y_n$ be positive dimensional proper subvarieties of $\operatorname{Max}S$ that intersect the smooth locus, and denote by $I_i := I(Y_i)$ their radical ideals in $S$.
By Theorem A, $R := \cap_i (k + I_i)$ is a nonnoetherian coordinate ring with $n$ positive dimensional closed points,
$$\mathfrak{m}_i := I_i \cap R \in \operatorname{Spec}R.$$
Following \cite[Section R]{L}, we call the endomorphism ring
\begin{equation} \label{cup}
A := \operatorname{End}_R( _RR \oplus \bigoplus_i \mathfrak{m}_i )
\end{equation}
the `noncommutative blowup' of $\operatorname{Max}R$ at the points $\mathfrak{m}_1, \ldots, \mathfrak{m}_n$.
We would like to know whether $A$ is a desingularization of its center $R$.

A resolution of a singularity $X$ is a proper birational morphism of schemes $Y \to X$ such that $Y$ is smooth. 
If we omit the requirement of properness, then we may say that $Y \to X$ is a desingularization of $X$.
We note the following:
\begin{enumerate}
 \item[(a)] Birationality implies that $X$ and $Y$ have isomorphic function fields,
$$\operatorname{Frac}k[X] \cong \operatorname{Frac}k[Y].$$
 \item[(b)] Let $\operatorname{Spec}S$ be an affine open subset of $Y$.
Then $\operatorname{Spec}S$ is smooth over $\operatorname{Spec}k$ at a closed point $\mathfrak{n} \in \operatorname{Spec}S$ if and only if\footnote{Since we are assuming $k$ algebraically closed, $\operatorname{Spec}S$ is smooth at $\mathfrak{n}$ if and only if $S_{\mathfrak{n}}$ is regular \cite[III.10.0.3]{H}.} the global dimension of $S_{\mathfrak{n}}$, the projective dimension of the residue field $S_{\mathfrak{n}}/\mathfrak{n} \cong k$, and the Krull dimension of $S_{\mathfrak{n}}$ all coincide \cite{AB1,AB2,S},
$$\operatorname{gldim}S_{\mathfrak{n}} = \operatorname{pd}_{S_{\mathfrak{n}}}(S_{\mathfrak{n}}/\mathfrak{n}) = \operatorname{dim}S_{\mathfrak{n}}.$$
\end{enumerate}

Following Brown and Hajarnavis's notion of a homologically homogeneous ring \cite{BH}, and Van den Bergh's notion of a noncommutative crepant resolution \cite{V}, we say that a noncommutative algebra $A$, module-finite over its noetherian center $R$, is a noncommutative desingularization of $R$ if the following two conditions hold:
\begin{enumerate}
 \item[(a$'$)] $\operatorname{Frac}R$ and $A \otimes_R \operatorname{Frac}R$ are Morita equivalent.
 \item[(b$'$)] For each closed point $\mathfrak{m} \in \operatorname{Spec}R$, the central localization $A_{\mathfrak{m}} := A \otimes_R R_{\mathfrak{m}}$ satisfies
$$\operatorname{gldim}A_{\mathfrak{m}} = \operatorname{pd}_{A_{\mathfrak{m}}}(A_{\mathfrak{m}}/\mathfrak{m}) = \operatorname{dim}R_{\mathfrak{m}}.$$
\end{enumerate}

However, the singularities we are considering here are nonnoetherian, and their noncommutative blowups are not module-finite over their centers (just as the case for non-cancellative dimer algebras).
Condition (b$'$) must therefore be modified to allow for this generality.
Such a modification is possible for tiled matrix algebras using the notions of `cycle algebra' and `cyclic localization', introduced in \cite{B3,B4} (Definition \ref{cycleregulardef}).
In cases of interest, if the center $R$ is noetherian, then the cycle algebra and center coincide, and cyclic localization is the same as central localization \cite[Theorem 4.1]{B4}.
We thus replace (b$'$) with the following condition:
\begin{enumerate}
 \item[(b$''$)] Let $S$ be the cycle algebra of $A$.
For each closed point $\mathfrak{m} \in \operatorname{Spec}R$ and each minimal prime $\mathfrak{q} \in \operatorname{Spec}S$ over $\mathfrak{m}$, the cyclic localization $A_{\mathfrak{q}}$ satisfies
$$\operatorname{gldim}A_{\mathfrak{q}} = \operatorname{pd}_{A_{\mathfrak{q}}}(A_{\mathfrak{q}}/\mathfrak{q}) = \operatorname{dim}S_{\mathfrak{q}}.$$
\end{enumerate}

Our second main theorem is the following.

\begin{Theorem C} (Theorem \ref{akzily}.)
Let $A$ be the endomorphism ring in (\ref{cup}), and let $S$ be its cycle algebra.
If each $Y_i$ is irreducible, or $n = 1$, then $A$ is a noncommutative desingularation of its center $R$:
\begin{itemize}
 \item[-] $\operatorname{Frac}R$ and $A \otimes_R \operatorname{Frac}R$ are Morita equivalent, and
 \item[-] for each $i \in [1,n]$ and minimal prime $\mathfrak{q} \in \operatorname{Spec}S$ over $\mathfrak{m}_i$, we have
$$\operatorname{gldim}A_{\mathfrak{q}} = \operatorname{pd}_{A_{\mathfrak{q}}}(A_{\mathfrak{q}}/\mathfrak{q}) = \operatorname{dim}S_{\mathfrak{q}}.$$
\end{itemize} 
Furthermore, the Azumaya locus of $A$ and the noetherian locus $U_{S/R}$ of $R$ coincide.
\end{Theorem C}

\section{Preliminary definitions}

Given an integral domain $k$-algebra $S$, denote by $\operatorname{Max}S$, $\operatorname{Spec}S$, $\operatorname{Frac}S$, and $\operatorname{dim}S$ the maximal spectrum (or variety), prime spectrum (or affine scheme), fraction field, and Krull dimension of $S$ respectively.
For a subset $I \subset S$, set $\mathcal{Z}(I) := \left\{ \mathfrak{n} \in \operatorname{Max}S \ | \ \mathfrak{n} \supseteq I \right\}$.

Given a (not-necessarily-commutative) $k$-algebra $A$ and an $A$-module $V$, denote by $\operatorname{gldim}A$ and $\operatorname{pd}_A(V)$ the left global dimension of $A$ and projective dimension of $V$, respectively.
By module we mean left module, unless stated otherwise.

The following definitions have been instrumental in studying dimer algebras (e.g., \cite{B2,B3,B5}).

\begin{Definition} \label{def dep} \rm{\cite[Definition 3.1]{B4}
Let $S$ be an integral domain and a finitely generated $k$-algebra, and let $R$ be a subalgebra of $S$.
\begin{itemize}
 \item We say $S$ is a \textit{depiction} of $R$ if the morphism
$$\iota_{S/R} : \operatorname{Spec}S \rightarrow \operatorname{Spec}R, \ \ \ \ \mathfrak{q} \mapsto \mathfrak{q} \cap R,$$
is surjective, and
$$U_{S/R} := 
\left\{ \mathfrak{n} \in \operatorname{Max}S \ | \ R_{\mathfrak{n} \cap R} = S_{\mathfrak{n}} \right\} = 
\left\{ \mathfrak{n} \in \operatorname{Max}S \ | \ R_{\mathfrak{n} \cap R} \text{ is noetherian} \right\} \not = \emptyset.$$
 \item The \textit{geometric height} of $\mathfrak{p} \in \operatorname{Spec}R$ is the minimum
$$\operatorname{ght}(\mathfrak{p}) := \operatorname{min} \left\{ \operatorname{ht}_S(\mathfrak{q}) \ | \ \mathfrak{q} \in \iota^{-1}_{S/R}(\mathfrak{p}), \ S \text{ a depiction of } R \right\}.$$
The \textit{geometric dimension} of $\mathfrak{p}$ is\footnote{Recall that if $S$ is an integral domain and a finitely generated $k$-algebra, then for each $\mathfrak{q} \in \operatorname{Spec}S$, we have $\operatorname{dim}S/\mathfrak{q} = \operatorname{dim}S - \operatorname{ht}(\mathfrak{q})$.}
$$\operatorname{gdim} \mathfrak{p} := \operatorname{dim}R - \operatorname{ght}(\mathfrak{p}).$$
 \end{itemize}
} \end{Definition}

For brevity, we will often write $\iota$ for $\iota_{S/R}$.
To note, if $R$ is depicted by $S$, then $R$ is noetherian if and only if $R = S$ \cite[Theorem 3.12]{B4}.

Now let $B$ be an integral domain and $k$-algebra, and let 
$$A = [A^{ij}] \subset M_n(B)$$ 
be a tiled matrix ring, that is, each diagonal $A^i := A^{ii}$ is a unital subalgebra of $B$.
The following definitions, with the exception of residue module, were introduced in \cite{B3}; the notion of residue module we are considering here is new. 

\begin{Definition} \label{cycleregulardef} \rm{\cite[Definition 3.1]{B3}
Set
$$R := k[\cap_i A^i] \ \ \ \text{ and } \ \ \ S := k[\cup_i A^i].$$
We call $S$ the \textit{cycle algebra} of $A$, and in cases of interest, $R$ is the center of $A$ \cite[Theorem 4.1]{B4}.
The \textit{cyclic localization} of $A$ at a prime $\mathfrak{q} \in \operatorname{Spec}S$ is the algebra
$$A_{\mathfrak{q}} := \left\langle \left[ \begin{matrix} A^1_{\mathfrak{q} \cap A^1} & A^{12} & \cdots & A^{1n} \\
A^{21} & A^2_{\mathfrak{q} \cap A^2} & & A^{2n} \\
\vdots & & \ddots & \vdots\\
A^{n1} & A^{n2} & \cdots & A^n_{\mathfrak{q} \cap A^n} \end{matrix} \right] \right\rangle
\subset M_n(\operatorname{Frac}B).$$
The \textit{residue module} $A_{\mathfrak{q}}/\mathfrak{q}$ of $A$ at $\mathfrak{q}$ is the quotient of $A_{\mathfrak{q}}$ by the ideal
$$A_{\mathfrak{q}} \left[ \begin{matrix} \mathfrak{q} \cap A^1 & 0 & \cdots & 0 \\ 
0 & \mathfrak{q} \cap A^2 & & 0 \\
\vdots & & \ddots & \vdots \\
0 & 0 & \cdots & \mathfrak{q} \cap A^n \end{matrix} \right] A_{\mathfrak{q}}.$$
}\end{Definition}

\begin{Remark} \label{void} \rm{
If $R = S$, that is, $A^i = A^j$ for each $i,j$, then cyclic localization coincides with the usual notion of central localization:
$$A_{\mathfrak{q}} \cong A \otimes_R R_{\mathfrak{q}} \ \ \ \text{ and } \ \ \ A_{\mathfrak{q}}/\mathfrak{q} \cong A \otimes_R R_{\mathfrak{q}}/\mathfrak{q}.$$
}\end{Remark}

\begin{Definition} \label{def2nc} \rm{\cite[Definition 3.2]{B3}
We say $A$ is \textit{cycle regular} at $\mathfrak{m} \in \operatorname{Max}R$ if for each minimal prime $\mathfrak{q} \in \operatorname{Spec}S$ over $\mathfrak{m}$, we have\footnote{In \cite{B3}, we defined $A$ to be cycle regular at $\mathfrak{m} \in \operatorname{Max}R$ if, for each minimal prime $\mathfrak{q} \in \operatorname{Spec}S$ over $\mathfrak{m}$ and each simple $A_{\mathfrak{q}}$-module $V$, we have $\operatorname{gldim}(A_{\mathfrak{q}}) = \operatorname{pd}_{A_{\mathfrak{q}}}(V) = \operatorname{dim}S_{\mathfrak{q}}$.
In this article, we replace the set of simple $A_{\mathfrak{q}}$-modules $V$ with the residue module $A_{\mathfrak{q}}/\mathfrak{q}$, which, in our case, is a direct sum of all such simples (see Propositions \ref{simple prop} and \ref{localized}).}
$$\operatorname{gldim}(A_{\mathfrak{q}}) = \operatorname{pd}_{A_{\mathfrak{q}}}(A_{\mathfrak{q}}/\mathfrak{q}) = \operatorname{dim}S_{\mathfrak{q}}.$$
If, in addition, $\operatorname{Frac}R$ and $A \otimes_R \operatorname{Frac}R$ are Morita equivalent, then we say $A$ is a \textit{noncommutative desingularization} of $R$.
}\end{Definition}

\section{Nonnoetherian coordinate rings with multiple positive dimensional points} \label{firstsect}

Let $S$ be an integral domain and a finitely generated $k$-algebra.
Let $I_1, \ldots, I_n$ be a collection of proper non-maximal nonzero radical ideals of $S$ such that, for each $i \not = j$, $\mathcal{Z}(I_i) \cap \mathcal{Z}(I_j) = \emptyset$; equivalently, $I_i$ and $I_j$ are coprime: $I_i + I_j = S$.
Unless stated otherwise, we denote by $R$ the algebra
$$R := \cap_i (k + I_i).$$

\begin{Remark} \rm{
If some $I_j$ were a maximal ideal of $S$, then $k + I_j = S$, whence $R = \cap_{i \not = j} (k + I_i)$.
The assumption that each $I_i$ is proper and nonzero implies that $\operatorname{dim}S \geq 1$.
}\end{Remark}

\begin{Lemma} \label{Andy}
Suppose $n \geq 2$.
For each $i \in [1,n]$, there are elements $a,b \in R$ satisfying
$$a \in I_i \setminus \left( \cup_{j \not = i} I_j \right), \ \ \ \ b \in \left( \cap_{j \not = i} I_j  \right) \setminus I_i,$$
and which sum to unity, $a + b = 1$.
\end{Lemma}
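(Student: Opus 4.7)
The plan is to use the coprimality of $I_i$ with $\bigcap_{j\neq i} I_j$ to produce the desired decomposition of unity, and then verify membership in $R$ via the identity $a + b = 1$.

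First I would establish that $I_i$ is coprime to $J := \bigcap_{j\neq i} I_j$. Since $I_i + I_j = S$ for each $j \neq i$ by hypothesis, a standard induction (or multiplying out $S = \prod_{j\neq i}(I_i + I_j)$ and using $\prod_{j\neq i} I_j \subseteq J$) gives $I_i + J = S$. Hence we can choose $a \in I_i$ and $b \in J$ with $a + b = 1$.

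Next I would rule out the exceptional memberships using the fact that the $I_\ell$ are proper. If $a \in I_j$ for some $j \neq i$, then since $b \in J \subseteq I_j$ we would have $1 = a + b \in I_j$, contradicting that $I_j$ is proper; so $a \notin \bigcup_{j\neq i} I_j$. Similarly, if $b \in I_i$, then together with $a \in I_i$ we would get $1 \in I_i$, which is impossible. So $b \notin I_i$ as required.

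Finally I would verify that both $a$ and $b$ lie in $R = \bigcap_\ell (k + I_\ell)$. Fix any index $\ell$. If $\ell = i$, then $a \in I_i \subseteq k + I_i$, and since $a + b = 1$ we also have $b = 1 - a \in k + I_i$. If $\ell \neq i$, then $b \in J \subseteq I_\ell \subseteq k + I_\ell$, and again $a = 1 - b \in k + I_\ell$. Thus $a, b \in k + I_\ell$ for every $\ell$, so $a, b \in R$.

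There is no real obstacle here; the whole argument is essentially a Chinese Remainder Theorem decomposition, with the slightly subtle point being that membership of both $a$ and $b$ in the intersection $R$ is secured automatically by the relation $a + b = 1$, which forces each of $a$ and $b$ to differ from something in $I_\ell$ by an element of $k$ for every $\ell$.
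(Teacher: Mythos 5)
Your proof is correct and takes essentially the same route as the paper. The only difference is in how you establish $I_i + \bigcap_{j\neq i} I_j = S$: you argue algebraically from pairwise coprimality (the standard CRT-style multiplication of $I_i + I_j = S$), whereas the paper argues geometrically via $\mathcal{Z}(I_i + \bigcap_{j\neq i} I_j) = \emptyset$ and the Nullstellensatz. Your version is marginally more elementary and avoids invoking the Nullstellensatz, but the decomposition $a + b = 1$, the exclusion of $a$ from $\bigcup_{j\neq i} I_j$ and $b$ from $I_i$ by properness, and the verification $a, b \in R$ via $a = 1 - b$ and $b = 1 - a$ all match the paper's argument.
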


\begin{proof}
Fix $i \in [1,n]$.
By assumption, we have
$$\mathcal{Z}(1) = \emptyset = \cup_{j \not = i} \left( \mathcal{Z}(I_i) \cap \mathcal{Z}(I_j) \right) = \mathcal{Z}(I_i) \cap \left( \displaystyle \cup_{j \not = i} \mathcal{Z}(I_j) \right) = \mathcal{Z}(I_i + \cap_{j \not = i} I_j).$$
Whence
$$1 \in I_i + \cap_{j \not = i}I_j.$$
Thus there is some $a \in I_i$ and $b \in \cap_{j \not = i} I_j$ such that $a + b = 1$.
In particular,
$$a = 1 - b \in I_i \cap \left( \cap_{j \not = i} \left( k + I_j \right) \right) \subset R.$$
It also follows that for each $j \not = i$,
$$a = 1 - b \in I_i \setminus I_j \ \ \ \text{ and } \ \ \ b = 1 - a \in I_j \setminus I_i.$$
\end{proof}

\begin{Proposition} \label{star}
Each ideal $I_i \cap R$ is a distinct closed point of $\operatorname{Spec}R$.
\end{Proposition}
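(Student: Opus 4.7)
The plan is to show two things: that each $I_i \cap R$ is a maximal ideal of $R$, and that distinct indices give distinct maximal ideals.

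For maximality, I would analyze the quotient $R/(I_i \cap R)$ and show it is isomorphic to $k$. The key observation is that because $R \subseteq k + I_i$, every element $r \in R$ can be written as $r = c + x$ with $c \in k$ and $x \in I_i$; then $r - c = x \in I_i$, and since $c \in k \subseteq R$ we get $r - c \in I_i \cap R$. Hence the composition $k \hookrightarrow R \twoheadrightarrow R/(I_i \cap R)$ is surjective. For injectivity of this composition, if $c \in k \cap I_i$ then $c$ lies in the proper ideal $I_i$, forcing $c = 0$. This gives $R/(I_i\cap R)\cong k$, so $I_i \cap R$ is maximal.

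For distinctness, when $n = 1$ there is nothing to prove. When $n \geq 2$, I would invoke Lemma \ref{Andy}: for each $i$ we obtain an element $a \in R$ with $a \in I_i$ but $a \notin I_j$ for every $j \neq i$ (equivalently, $a \in I_i \cap R$ but $a \notin I_j \cap R$). This immediately separates $I_i \cap R$ from $I_j \cap R$ for $j \neq i$.

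Neither step presents a genuine obstacle: the maximality argument is a direct computation using the defining inclusion $R \subseteq k + I_i$, and the separation is essentially the content of Lemma \ref{Andy}. The only subtlety worth being careful about is making sure the non-maximality hypothesis on $I_i$ is not silently needed (it isn't, for this proposition; propriety of $I_i$ suffices to ensure $1 \notin I_i$, which is what forces the quotient $R/(I_i \cap R)$ to be a field rather than zero, and to give $a \notin I_j$ from $b \in I_j$ in the separation step).
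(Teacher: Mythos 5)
Your proof is correct and follows essentially the same route as the paper's: both establish $R/(I_i \cap R) \cong k$ from the defining inclusion $R \subseteq k + I_i$ (the paper phrases this as constructing an algebra epimorphism $R \to k$ with kernel $I_i \cap R$, you phrase it as showing the composition $k \hookrightarrow R \twoheadrightarrow R/(I_i\cap R)$ is an isomorphism — the same content), and both derive distinctness directly from Lemma \ref{Andy}. Your parenthetical observation that non-maximality of $I_i$ is not needed here is also accurate.
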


\begin{proof}
Fix $i$.
For each $a \in R \subseteq (k + I_i)$, there is some $\alpha_i \in k$ and $b_i \in I_i$ such that $a = \alpha_i + b_i$.
In particular, there is an algebra epimorphism
$$R \to k, \ \ \ a \mapsto \alpha_i,$$
with kernel $I_i \cap R$; whence an algebra isomorphism $R/(I_i \cap R) \cong k$.
Furthermore, there exists some $a \in (I_i \cap R) \setminus (\cup_{j \not = i} I_j)$, by Lemma \ref{Andy}.
Thus, for each $j \not = i$,
$$I_j \cap R \not = I_i \cap R.$$
Therefore each $I_i \cap R$ is a distinct maximal ideal of $R$.
\end{proof}

\begin{Proposition} \label{dim SI2}
The locus $U_{S/R} := \left\{ \mathfrak{n} \in \operatorname{Max}S \ | \ R_{\mathfrak{n} \cap R} = S_{\mathfrak{n}}\right\}$ is given by
\begin{equation*}
U_{S/R} = \left( \cup_i \mathcal{Z}(I_i) \right)^c.
\end{equation*}
\end{Proposition}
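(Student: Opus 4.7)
The plan is to prove the set equality by establishing the two inclusions separately. Two structural observations will be used repeatedly. First, Proposition \ref{star} already tells us that whenever $\mathfrak{n} \in \mathcal{Z}(I_i)$, the maximality of $I_i \cap R$ combined with the containment $I_i \cap R \subseteq \mathfrak{n} \cap R$ and the properness of the prime $\mathfrak{n} \cap R$ force
$$\mathfrak{n} \cap R = I_i \cap R.$$
Second, $\cap_i I_i$ is an ideal of $S$ that sits inside $R$, since $\cap_i I_i \subseteq I_j \subseteq k + I_j$ for every $j$.

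For the inclusion $(\cup_i \mathcal{Z}(I_i))^c \subseteq U_{S/R}$, given $\mathfrak{n}$ avoiding every $\mathcal{Z}(I_i)$, I will choose $f_i \in I_i \setminus \mathfrak{n}$ for each $i$ and set $f := \prod_i f_i$. Then $f \in \cap_i I_i \subseteq R$ while $f \notin \mathfrak{n}$ (primality), so $f \in R \setminus (\mathfrak{n} \cap R)$; moreover $fS \subseteq \cap_i I_i \subseteq R$. This immediately yields $S_\mathfrak{n} \subseteq R_{\mathfrak{n} \cap R}$: any $s \in S$ can be written as $sf/f$, and $1/t$ for $t \in S \setminus \mathfrak{n}$ equals $f/(tf)$ with $tf \in R \setminus (\mathfrak{n} \cap R)$. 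The reverse containment $R_{\mathfrak{n} \cap R} \subseteq S_\mathfrak{n}$ is automatic from $R \subseteq S$ and the observation that $R \setminus (\mathfrak{n} \cap R) \subseteq S \setminus \mathfrak{n}$.

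The main obstacle is the other direction, $U_{S/R} \subseteq (\cup_i \mathcal{Z}(I_i))^c$. Suppose $\mathfrak{n} \supseteq I_i$; since $I_i$ is assumed non-maximal, one has $I_i \subsetneq \mathfrak{n}$, so I can pick $x \in \mathfrak{n} \setminus I_i$. The plan is to exhibit $x$ as a witness that $R_{\mathfrak{n} \cap R} \neq S_\mathfrak{n}$ by showing $x \notin R_{\mathfrak{n} \cap R}$ (while obviously $x \in S_\mathfrak{n}$). Assuming for contradiction $x = a/b$ with $a \in R$ and $b \in R \setminus (\mathfrak{n} \cap R) = R \setminus (I_i \cap R)$, the containment $R \subseteq k + I_i$ lets me write $a = \mu + d$ and $b = \lambda + c$ with $\mu,\lambda \in k$ and $c,d \in I_i$; the condition $b \notin I_i$ gives $\lambda \neq 0$. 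Reducing $xb = a$ modulo $I_i$ yields $\lambda x \equiv \mu \pmod{I_i}$, and since $\lambda x \in \mathfrak{n}$ and $I_i \subseteq \mathfrak{n}$, this forces $\mu \in \mathfrak{n} \cap k = \{0\}$. Hence $\lambda x \in I_i$, so $x \in I_i$, contradicting the choice of $x$. The argument hinges on combining the structural inclusion $R \subseteq k + I_i$ with the identification $\mathfrak{n} \cap R = I_i \cap R$ and the non-maximality of $I_i$, which together yield the contradiction in a single clean computation.
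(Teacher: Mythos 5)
Your proof is correct. The easy inclusion $\bigl(\cup_i \mathcal{Z}(I_i)\bigr)^c \subseteq U_{S/R}$ is essentially the same as the paper's: form $f = \prod_i f_i \in (\cap_i I_i)\setminus \mathfrak{n}$ and use $fS \subseteq \cap_i I_i \subseteq R$ to rewrite every fraction of $S_{\mathfrak{n}}$ as a fraction of $R_{\mathfrak{n}\cap R}$.

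For the harder inclusion $U_{S/R} \subseteq \bigl(\cup_i \mathcal{Z}(I_i)\bigr)^c$, your argument differs genuinely from the paper's. The paper uses the non-maximality of $I_i$ to produce a \emph{second} maximal ideal $\mathfrak{n}' \neq \mathfrak{n}$ containing $I_i$; it then fixes $c \in \mathfrak{n}\setminus\mathfrak{n}'$ and derives a contradiction from $c = a/b$ using primality of $\mathfrak{n}'$ together with $\mathfrak{n}\cap R = \mathfrak{n}'\cap R = I_i\cap R$. You instead use non-maximality only via the strict inclusion $I_i \subsetneq \mathfrak{n}$, pick $x \in \mathfrak{n}\setminus I_i$, and derive the contradiction internally: write $a = \mu + d$, $b = \lambda + c$ via $R \subseteq k + I_i$, note $\lambda \neq 0$ since $b \notin I_i = \mathfrak{n}\cap R$-up-to-$R$ (more precisely $b \notin I_i$ because $b \in R\setminus(I_i\cap R)$), reduce $xb = a$ modulo $I_i$ to get $\lambda x \equiv \mu \pmod{I_i}$, then use $\lambda x \in \mathfrak{n}$ and $I_i \subseteq \mathfrak{n}$ to force $\mu \in \mathfrak{n}\cap k = \{0\}$, whence $x \in I_i$, a contradiction. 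Each step checks out ($xc \in I_i$ since $I_i$ is an $S$-ideal; $\mathfrak{n}\cap R = I_i\cap R$ follows from Proposition~\ref{star} as you note). Your route avoids introducing the auxiliary ideal $\mathfrak{n}'$ entirely and replaces the paper's prime-avoidance step with a direct scalar computation modulo $I_i$; the paper's route is slightly more geometric (two points in $\mathcal{Z}(I_i)$ get glued, so a function separating them cannot be regular at the glued point), while yours leans more transparently on the defining decomposition $R \subseteq k + I_i$.
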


\begin{proof}
(i) We first claim that $U_{S/R} \subseteq \left( \cup_i \mathcal{Z}(I_i) \right)^c$.
Indeed, let $\mathfrak{n} \in \cup_i \mathcal{Z}(I_i)$.
Then $\mathfrak{n}$ contains some $I_i$.
By assumption, $I_i$ is a non-maximal radical ideal of $S$.
Thus there is another maximal ideal $\mathfrak{n}' \not = \mathfrak{n}$ of $S$ which contains $I_i$.
Whence
$$I_i \cap R \subseteq \mathfrak{n} \cap R \not = R \ \ \ \text{ and } \ \ \ I_i \cap R \subseteq \mathfrak{n}' \cap R \not = R.$$
But $I_i \cap R$ is a maximal ideal of $R$ by Proposition \ref{star}.
Therefore
$$\mathfrak{n} \cap R = I_i \cap R = \mathfrak{n}' \cap R.$$

Now fix $c \in \mathfrak{n} \setminus \mathfrak{n}'$.
Assume to the contrary that $c \in R_{\mathfrak{n} \cap R}$.
Then there is some $a \in R$ and $b \in R \setminus (\mathfrak{n} \cap R)$ such that $c = \frac ab$.
Whence
$$a = bc \in \mathfrak{n} \cap R = \mathfrak{n}' \cap R.$$
In particular, $bc \in \mathfrak{n}'$ with $b,c \in S$.
Therefore
\begin{equation} \label{spaceship}
b \in \mathfrak{n}',
\end{equation}
since $c \not \in \mathfrak{n}'$ and $\mathfrak{n}'$ is a prime ideal of $S$.
But $b \in R$ and
$$b \not \in \mathfrak{n} \cap R = \mathfrak{n}' \cap R.$$
Whence $b \not \in \mathfrak{n}'$, a contradiction to (\ref{spaceship}).
Thus $c \in S_{\mathfrak{n}} \setminus R_{\mathfrak{n} \cap R}$.
Therefore $\mathfrak{n} \in U_{S/R}^c$.

(ii) We now claim that $U_{S/R} \supseteq \left( \cup_i \mathcal{Z}(I_i) \right)^c$.\footnote{This claim was proven in the special case $n = 1$ in \cite[Proposition 2.8]{B4}.}
Let $\mathfrak{n} \in \left( \cup_i \mathcal{Z}(I_i) \right)^c$.
For each $i$, $\mathfrak{n} \not \supseteq I_i$.
In particular, for each $i$ there is some $c_i \in I_i \setminus \mathfrak{n}$.
Furthermore, since $\mathfrak{n}$ is prime, we have
\begin{equation} \label{yoohoo}
c := c_1 \cdots c_n \in \left( \cap_i I_i \right) \setminus \mathfrak{n}.
\end{equation}

Now let $\frac ab \in S_{\mathfrak{n}}$, with $a \in S$ and $b \in S \setminus \mathfrak{n}$.
Then by (\ref{yoohoo}),
$$ac \in R \ \ \ \text{ and } \ \ \ bc \in R \setminus (\mathfrak{n} \cap R).$$
Thus
$$\frac ab = \frac{ac}{bc} \in R_{\mathfrak{n} \cap R}.$$
Whence
$$S_{\mathfrak{n}} \subseteq R_{\mathfrak{n} \cap R} \subseteq S_{\mathfrak{n}}.$$
Therefore $R_{\mathfrak{n} \cap R} = S_{\mathfrak{n}}$.
\end{proof}

\begin{Lemma} \label{today}
If $J$ is a proper ideal of $R$ and $\mathcal{Z}(J) \cap U_{S/R} = \emptyset$, then $J$ is contained in some $I_i$.
\end{Lemma}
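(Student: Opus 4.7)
My plan is to argue by contradiction: assume $J \not\subseteq I_i$ for every $i$ and derive $1 \in J$. The core idea is to produce an element $g \in \left(\bigcap_i I_i\right) \cap R$ with $1 - g \in J$, which will then let me convert an $S$-level identity into the desired $R$-level contradiction.

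To construct $g$, I would invoke Proposition \ref{star}: each $I_i \cap R$ is a maximal ideal of $R$, and the assumption $J \not\subseteq I_i$ together with $I_i \cap R \subseteq I_i$ gives $J \not\subseteq I_i \cap R$, hence $J + (I_i \cap R) = R$. So for each $i$, I can pick $j_i \in J$ and $h_i \in I_i \cap R$ with $j_i + h_i = 1$, and set $g := h_1 \cdots h_n$. Then $g \in R$, $g \in \bigcap_i I_i$ (each $h_i$ lies in $I_i$, which is an $S$-ideal), and expanding $g = \prod_i(1 - j_i)$ places $1 - g \in J$.

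Next I would establish that $\mathcal{Z}(J) \subseteq U_{S/R}$ in $\operatorname{Max}S$. For any $\mathfrak{n} \in \operatorname{Max}S$ containing $J$, the relation $1 - g \in J \subseteq \mathfrak{n}$ forces $g \not\in \mathfrak{n}$; but if $\mathfrak{n}$ contained some $I_k$, then $g \in I_k \subseteq \mathfrak{n}$, a contradiction. So $\mathfrak{n}$ avoids every $\mathcal{Z}(I_k)$, and Proposition \ref{dim SI2} places $\mathfrak{n}$ in $U_{S/R}$. Combined with the hypothesis $\mathcal{Z}(J) \cap U_{S/R} = \emptyset$, this forces $\mathcal{Z}(J) = \emptyset$, and hence $JS = S$; write $1 = \sum_k a_k s_k$ with $a_k \in J$ and $s_k \in S$.

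The final step — which I expect to be the main hurdle — moves from $JS = S$ back to the desired contradiction $J = R$. The key observation is that $\bigcap_i I_i \subseteq R$, since any element of $\bigcap_i I_i$ lies in $I_j \subseteq k + I_j$ for every $j$. Multiplying $1 = \sum_k a_k s_k$ by $g$ gives $g = \sum_k a_k(g s_k)$, and each $g s_k$ lies in $\bigcap_i I_i \subseteq R$, so this is now an $R$-linear combination of elements of $J$; hence $g \in J$. Combined with $1 - g \in J$ this yields $1 \in J$, contradicting the properness of $J$, so some $I_i$ must contain $J$ after all.
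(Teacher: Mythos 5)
Your proof is correct, and it takes a genuinely different route from the paper's. The paper argues directly: it picks $\mathfrak{n} \in \mathcal{Z}(J)$, notes $\mathfrak{n} \in U_{S/R}^c = \cup_j \mathcal{Z}(I_j)$ by Proposition \ref{dim SI2}, so $\mathfrak{n} \supseteq I_i$ for some $i$, then deduces $\mathfrak{n}\cap R = I_i\cap R$ from the maximality of $I_i\cap R$ (Proposition \ref{star}), giving $J \subseteq \mathfrak{n}\cap R \subseteq I_i$ in a few lines. That argument, however, tacitly assumes $\mathcal{Z}(J) \neq \emptyset$, equivalently $JS \neq S$, which is not justified there. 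You instead argue by contradiction: assuming $J \not\subseteq I_i$ for all $i$, you use the maximality of each $I_i\cap R$ to build a CRT-style element $g \in \left(\cap_i I_i\right)\cap R$ with $1-g\in J$, deduce $\mathcal{Z}(J)\subseteq U_{S/R}$ and hence $\mathcal{Z}(J)=\emptyset$ from the hypothesis, and then dispose of exactly the case the paper leaves implicit: because $\cap_i I_i$ is a common ideal of $R$ and $S$, multiplying the identity $1=\sum_k a_k s_k$ (from $JS=S$) by $g$ pushes the coefficients into $R$, yielding $g\in J$ and so $1\in J$, a contradiction. In short, the paper's proof is a short pointwise argument on $\operatorname{Max}S$ resting on Propositions \ref{star} and \ref{dim SI2}, while yours is an element-chasing argument that exploits $\cap_i I_i$ as a conductor-type ideal between $R$ and $S$; yours is longer but more self-contained, since it does not rely on the unstated fact that a proper ideal of $R$ has nonempty zero locus in $\operatorname{Max}S$.
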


\begin{proof}
Suppose the hypotheses hold, and let $\mathfrak{n} \in \mathcal{Z}(J)$.
Then $\mathfrak{n} \in U^c_{S/R}$.
Whence $\mathfrak{n} \in \cup_j \mathcal{Z}(I_j)$ by Proposition \ref{dim SI2}.
Thus $\mathfrak{n}$ contains some $I_i$.
Consequently,
$$I_i \cap R \subseteq \mathfrak{n} \cap R \not = R.$$
Whence $I_i \cap R = \mathfrak{n} \cap R$ since $I_i \cap R \in \operatorname{Max}R$ by Proposition \ref{star}.
Therefore
$$J = J \cap R \subseteq \mathfrak{n} \cap R = I_i \cap R \subseteq I_i.$$
\end{proof}

\begin{Lemma} \label{external}
For each $i$,
\begin{equation} \label{R = }
R_{I_i \cap R} = (k + I_i)_{I_i}.
\end{equation}
\end{Lemma}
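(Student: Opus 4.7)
The plan is to realize both $R_{I_i \cap R}$ and $(k+I_i)_{I_i}$ as subrings of the fraction field $\operatorname{Frac}(S)$, and prove they coincide by establishing both inclusions. Recall that both localizations are formed at multiplicatively closed sets: $R\setminus (I_i \cap R)$ is multiplicative because $I_i \cap R$ is a maximal ideal of $R$ by Proposition \ref{star}, and $(k+I_i)\setminus I_i$ is multiplicative because $(k+I_i)/I_i \cong k$ is a field, so $I_i$ is a maximal (hence prime) ideal of $k+I_i$.

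The inclusion $R_{I_i \cap R} \subseteq (k+I_i)_{I_i}$ is immediate: $R \subseteq k+I_i$, and any $b \in R$ with $b \notin I_i \cap R$ fails to be in $I_i$ at all (since $R \cap I_i = I_i \cap R$), so $b \in (k+I_i)\setminus I_i$.

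For the reverse inclusion, I would first dispense with the trivial case $n=1$, in which $R = k+I_1$ and $I_1 \cap R = I_1$, so both sides literally agree. For $n \geq 2$, the key is to invoke Lemma \ref{Andy} to produce an element $b \in R$ lying in $\bigcap_{j \neq i} I_j$ but not in $I_i$, and use it to clear denominators. Given $f/g \in (k+I_i)_{I_i}$ with $f \in k+I_i$ and $g \in (k+I_i)\setminus I_i$, I would verify that $fb, gb \in R$: for each $j \neq i$, the element $fb$ lies in $I_j \subseteq k+I_j$ because $b \in I_j$; and $fb$ lies in $k+I_i$ because $f \in k+I_i$ and $k+I_i$ is a subring of $S$. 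Hence $fb \in \cap_j (k+I_j) = R$, and likewise $gb \in R$. Moreover, since $(k+I_i)\setminus I_i$ is multiplicatively closed and contains both $g$ and $b$, the product $gb$ lies outside $I_i$, so $gb \in R\setminus (I_i \cap R)$. Therefore $f/g = (fb)/(gb) \in R_{I_i \cap R}$.

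I do not anticipate a serious obstacle here: the whole proof is a denominator-clearing maneuver, and the only nontrivial input is recognizing that Lemma \ref{Andy} provides precisely the multiplier needed to drag both numerator and denominator into $R$ without creating a factor in $I_i$. The mild bookkeeping point to watch is remembering that the $I_i$ need not be prime in $S$, so one must justify $gb \notin I_i$ via the field structure of $(k+I_i)/I_i$ rather than via primality of $I_i$ in $S$.
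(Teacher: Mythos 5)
Your proposal is correct and follows essentially the same route as the paper: both dispose of $n=1$ trivially, establish the forward inclusion $R_{I_i\cap R}\subseteq (k+I_i)_{I_i}$ via the containment of multiplicative sets, and for the nontrivial reverse inclusion invoke Lemma \ref{Andy} to produce a multiplier $b\in\bigl(\cap_{j\neq i}I_j\bigr)\cap R\setminus I_i$, clearing denominators while using that $I_i$ is prime in $k+I_i$ to keep the new denominator outside $I_i$. Your forward inclusion is phrased a bit more directly than the paper's chain $R_{I_i\cap R}=(\cap_j(k+I_j))_{I_i\cap R}\subseteq\cap_j(k+I_j)_{I_i\cap(k+I_j)}\subseteq(k+I_i)_{I_i}$, but the content is the same.
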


\begin{proof}
The lemma is trivial if $n = 1$, so suppose $n \geq 2$.
Fix $i \in [1,n]$.
By Lemma \ref{Andy}, there is some
$$c \in \left( \cap_{j \not = i} I_j \right) \cap R \setminus I_i.$$

Let $\frac ab \in \left(k + I_i \right)_{I_i}$, with $a \in k + I_i$ and $b \in \left( k + I_i \right) \setminus I_i$.
Since $c$ is in $R$, $c$ is in $k + I_i$.
Thus, since $a$ is also in $k+ I_i$, the product $ac$ is in $k + I_i$.
Furthermore, since $c$ is in $\cap_{j \not = i} I_j$, $ac$ is in $\cap_{j \not = i}I_j$.
Whence, $ac$ is in $R$.
Similarly, $bc$ is in $R$. 
But $bc$ is not in $I_i$ since $I_i$ is a maximal, hence prime, ideal of $k + I_i$.
Consequently,
$$\frac ab = \frac{ac}{bc} \in R_{I_i \cap R}.$$
It follows that
$$(k + I_i)_{I_i} \subseteq R_{I_i \cap R}.$$

Conversely,
$$R_{I_i \cap R} = \left( \cap_j \left( k + I_j \right) \right)_{I_i \cap R} \subseteq \cap_j  \left( k + I_j \right)_{I_i \cap \left( k + I_j \right)} \subseteq \left( k + I_i \right)_{I_i \cap \left( k + I_i \right)} = \left( k + I_i \right)_{I_i}.$$
Therefore (\ref{R = }) holds.
\end{proof}

For the following, note that if $\mathfrak{n}_1, \ldots, \mathfrak{n}_{\ell}$ are maximal ideals of $S$, then
$$I = \mathfrak{n}_1 \cap \cdots \cap \mathfrak{n}_{\ell} = \sqrt{\mathfrak{n}_1 \cdots \mathfrak{n}_{\ell}}$$
is a radical ideal of $S$ satisfying $\operatorname{dim}S/I = 0$.

\begin{Lemma} \label{R = k + I}
Suppose $I$ is a radical ideal of $S$ satisfying $\operatorname{dim}S/I = 0$.
Then the ring $R = k + I$ is noetherian.
\end{Lemma}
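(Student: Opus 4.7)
The plan is to exhibit $S$ as a finitely generated module over $R$ and then invoke the Eakin--Nagata theorem, which says that if a commutative noetherian ring is finitely generated as a module over a subring, then the subring is itself noetherian.

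First, I would observe that $S/I$ is a finitely generated $k$-algebra of Krull dimension zero. By Noether normalization (or equivalently, by the standard fact that a zero-dimensional finitely generated algebra over a field is Artinian and module-finite), $S/I$ is finite-dimensional as a $k$-vector space. Choose a finite $k$-basis of $S/I$ and lift it to elements $s_1, \ldots, s_{\ell} \in S$.

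Second, I would note that every $s \in S$ then admits a decomposition $s = \sum_{j=1}^{\ell} \alpha_j s_j + \iota$ with $\alpha_j \in k$ and $\iota \in I$. Since both $k$ and $I$ are contained in $R = k + I$, this gives
\[
S \;=\; R s_1 + \cdots + R s_{\ell} + R,
\]
so $S$ is finitely generated as an $R$-module. Because $S$ is a finitely generated $k$-algebra, it is noetherian by Hilbert's basis theorem.

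Third, I would apply the Eakin--Nagata theorem to the inclusion $R \subseteq S$: since $S$ is a commutative noetherian ring which is finitely generated as an $R$-module, $R$ itself must be noetherian.

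There isn't really a serious obstacle here; the only point requiring care is justifying that $\dim S/I = 0$ forces $S/I$ to be finite-dimensional over $k$ (for which the radical hypothesis is in fact not needed---only that $S/I$ is a finitely generated $k$-algebra of Krull dimension zero). Everything else is a direct application of Eakin--Nagata.
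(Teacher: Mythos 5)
Your proof is correct, and it takes a genuinely different route from the paper's. The paper argues by contradiction: assuming $R = k + I$ is nonnoetherian, it uses Proposition \ref{dim SI2} to identify $\mathcal{Z}(I)$ with $U^c_{S/R}$, and then invokes \cite[Theorem 3.13.2]{B1}, which says that for a nonnoetherian subalgebra $R$ of a finitely generated $k$-algebra $S$ with some $\mathfrak{m} \in \iota(U^c_{S/R})$ satisfying $\sqrt{\mathfrak{m}S} = \mathfrak{m}$, one has $\operatorname{dim} U^c_{S/R} \geq 1$; this contradicts $\operatorname{dim} S/I = 0$. Your argument is more self-contained and structurally simpler: you observe that $\operatorname{dim}S/I = 0$ forces $S/I$ to be a finite-dimensional $k$-vector space (Noether normalization), lift a $k$-basis of $S/I$ to a finite $R$-module generating set for $S$, and conclude via Eakin--Nagata since $S$ is noetherian. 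This avoids any reliance on Proposition \ref{dim SI2} or on the external result from \cite{B1}, and you are right that the radical hypothesis is superfluous to your reasoning; in the paper's proof, radicality is used precisely to guarantee $\sqrt{IS} = I$ so that the cited theorem applies. The trade-off is that the paper's approach stays inside the nonnoetherian-geometry framework it is developing, whereas yours imports Eakin--Nagata, a classical but nontrivial theorem, from outside that framework.
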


\begin{proof}
Suppose $R$ is nonnoetherian.
We claim that
$$\operatorname{dim}S/I = \operatorname{dim}\mathcal{Z}(I) \stackrel{\textsc{(i)}}{=} \operatorname{dim}U^c_{S/R} \stackrel{\textsc{(ii)}}{\geq} 1.$$
Indeed, (\textsc{i}) holds since by Proposition \ref{dim SI2},
\begin{equation} \label{ZI}
\mathcal{Z}(I) = U^c_{S/R}.
\end{equation}

To show (\textsc{ii}), recall \cite[Theorem 3.13.2]{B4}:\footnote{In the published version of \cite[Theorem 3.13.2]{B4}, $S$ is assumed to be a depiction of $R$, but this is not used in the proof of the theorem.} if $R$ is a nonnoetherian subalgebra of a finitely generated $k$-algebra $S$, and there is some $\mathfrak{m} \in \iota(U^c_{S/R})$ satisfying $\sqrt{\mathfrak{m}S} = \mathfrak{m}$, then
$$\operatorname{dim}U^c_{S/R} \geq 1.$$
In our case, $R = k + I$ is nonnoetherian and $\sqrt{IS} = I$.
Moreover, $I$ is in $\iota(U_{S/R}^c)$: for $\mathfrak{n} \in \mathcal{Z}(I)$, we have
$$I \stackrel{\textsc{(a)}}{=} \mathfrak{n} \cap R  = \iota(\mathfrak{n}) \in \iota(\mathcal{Z}(I)) \stackrel{\textsc{(b)}}{=} \iota(U_{S/R}^c),$$
where (\textsc{a}) holds since $I$ is maximal in $R$, and (\textsc{b}) holds by (\ref{ZI}). 
Therefore (\textsc{ii}) holds.
\end{proof}

\begin{Proposition} \label{internal}
Suppose each $I_i$ is a radical ideal of $S$.
\begin{enumerate}
 \item If $\operatorname{dim}S/I_i = 0$ for each $i$, then $R$ is noetherian.
 \item If $\operatorname{dim}S/I_i = 0$, then the localization $R_{I_i \cap R}$ is noetherian.
 \end{enumerate}
\end{Proposition}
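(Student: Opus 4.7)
My plan is to handle the two parts separately, leveraging lemmas already established in this section. Part (2) follows essentially by assembly: Lemma \ref{external} identifies $R_{I_i \cap R}$ with $(k + I_i)_{I_i}$, and Lemma \ref{R = k + I}, which applies because $I_i$ is radical with $\operatorname{dim}S/I_i = 0$, guarantees that $k + I_i$ is noetherian. Since any localization of a noetherian ring is noetherian, part (2) is immediate.

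For part (1), my strategy is to show that $S$ is a finitely generated $R$-module and then apply the Eakin--Nagata theorem to the inclusion $R \subset S$ (noting that $S$ is noetherian as a finitely generated $k$-algebra) to conclude that $R$ is noetherian. First I would set $I := \cap_i I_i$ and observe that $I \subset I_j \subset k + I_j$ for every $j$, so $I \subset R$; in particular $I$ is an ideal of both $R$ and $S$. Next, since the $I_i$ are pairwise coprime by the standing assumption of this section, the Chinese Remainder Theorem gives $S/I \cong \prod_i S/I_i$. The hypothesis that each $I_i$ is radical with $\operatorname{dim}S/I_i = 0$ forces each $S/I_i$ to be a reduced zero-dimensional finitely generated $k$-algebra, hence (as $k$ is algebraically closed) a finite product of copies of $k$; in particular $S/I$ is finite-dimensional over $k$.

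Finally, I would choose elements $s_1, \ldots, s_m \in S$ whose images span $S/I$ as a $k$-vector space. For each $s \in S$ one then has a decomposition $s = \sum_j c_j s_j + \eta$ with $c_j \in k \subset R$ and $\eta \in I \subset R$, yielding $S = R + \sum_j R s_j$, so that $S$ is finitely generated as an $R$-module, completing the Eakin--Nagata argument. The main conceptual obstacle I anticipate is spotting the right tool: each $k + I_i$ is individually noetherian by Lemma \ref{R = k + I}, but an arbitrary intersection of noetherian subrings of a common overring need not be noetherian, so one cannot argue directly from $R = \cap_i (k + I_i)$. Once Eakin--Nagata is in view, the remaining ingredients---the inclusion $I \subset R$ and the finite-dimensionality of $S/I$ via CRT---are immediate consequences of the pairwise coprimality and radical-with-dimension-zero hypotheses.
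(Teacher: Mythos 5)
Your part (2) matches the paper's proof exactly: Lemma \ref{external} gives $R_{I_i\cap R}=(k+I_i)_{I_i}$, Lemma \ref{R = k + I} gives $k+I_i$ noetherian, and localization preserves noetherianness. Your part (1) is correct but takes a genuinely different route from the paper. The paper proceeds by induction on the number of ideals: writing $R^m := \cap_{i=1}^m(k+I_i)$, it invokes Lemma \ref{Andy} to produce $a\in I_{m+1}\setminus\cup_{i\leq m}I_i$ and $b\in(\cap_{i\leq m}I_i)\setminus I_{m+1}$ with $a+b=1$, checks that $R^{m+1}_a = R^m_a$ and $R^{m+1}_b=(k+I_{m+1})_b$, and concludes via the local-to-global criterion that a ring covered by finitely many noetherian principal localizations is noetherian; the base case is Lemma \ref{R = k + I}, which the paper proves contrapositively via a citation to \cite{B1}. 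Your argument instead shows directly that $I:=\cap_i I_i$ lies in $R$, uses pairwise coprimality and the Chinese Remainder Theorem to see that $S/I\cong\prod_i S/I_i$ is a finite-dimensional $k$-vector space, and hence that $S$ is a finitely generated $R$-module, so that Eakin--Nagata applied to $R\subset S$ gives the conclusion. This is shorter, avoids the induction, and in fact yields a self-contained alternative proof of Lemma \ref{R = k + I} itself (the $n=1$ case), bypassing the appeal to \cite[Theorem 3.13.2]{B1}. The trade-off is that Eakin--Nagata is a heavier imported theorem than the elementary local-to-global principle; since the paper already quotes comparable results from \cite{H} and \cite{B1}, the difference in practice is stylistic rather than substantive.
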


\begin{proof}
(1) Suppose  $\operatorname{dim}S/I_i = 0$ for each $i$.
Set
$$R^m := \cap_{i = 1}^m \left( k + I_i \right).$$
We proceed by induction on $m$.

By Lemma \ref{R = k + I}, $R^1$ is noetherian.
So suppose $R^m$ is noetherian; we claim that $R^{m+1}$ is noetherian.

Indeed, recall that a ring $T$ is noetherian if there is a finite set of elements $a_1, \ldots, a_m \in T$ such that $(a_1, \ldots, a_m)T = T$, and each localization $T_{a_i} := T[a_i^{-1}]$ is noetherian (e.g., \cite[Proposition III.3.2]{H}).

By Lemma \ref{Andy}, $R^{m+1}$ contains elements
\begin{equation} \label{I m+1}
a \in I_{m+1} \setminus \left( \cup_{i = 1}^m I_i \right) \ \ \ \text{ and } \ \ \ b \in \left( \cap_{i = 1}^m I_i \right) \setminus I_{m+1}
\end{equation}
satisfying $a + b = 1$.
In particular,
$$(a,b)R^{m+1} = R^{m+1}.$$
Furthermore, (\ref{I m+1}) implies
\begin{equation} \label{localizations...}
R^{m+1}_a = R^m_a \ \ \ \text{ and } \ \ \ R^{m+1}_b = (k + I_{m+1})_b.
\end{equation}
But $R^m$ is noetherian by assumption, and $(k + I_{m+1})$ is noetherian by Lemma \ref{R = k + I}.
Thus the localizations (\ref{localizations...}) are noetherian.
Therefore $R^{m+1}$ is noetherian, proving our claim.

(2) Now suppose $\operatorname{dim}S/I_i = 0$.
Then the ring $k + I_i$ is noetherian by Lemma \ref{R = k + I}.
Thus the localization $(k + I_i)_{I_i}$ is noetherian.
But $R_{I_i \cap R} = (k + I_i)_{I_i}$ by Lemma \ref{external}.
Therefore $R_{I_i \cap R}$ is noetherian.
\end{proof}

\begin{Proposition} \label{dim SI}
Suppose $I$ is a nonzero radical ideal of $S$ satisfying $\operatorname{dim}S/I \geq 1$.
Then the ring $R = k + I$ is nonnoetherian and $I$ contains a strict infinite ascending chain of ideals of $R$.\footnote{This proposition is erroneously claimed as a corollary to \cite[Theorem 3.13, published version]{B4}.
\cite[Theorem 3.13]{B4} assumes that $S$ is a depiction of $R$, but if $R$ is noetherian, then $S$ will not be a depiction of $R$.  Indeed, in this case the only depiction of $R$ will be itself \cite[Theorem 3.12]{B4}, and $R \not = S$ if $I$ is a non-maximal ideal of $S$.}
\end{Proposition}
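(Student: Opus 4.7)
The plan is to exhibit an explicit strictly ascending infinite chain of $R$-ideals lying inside $I$, which will simultaneously establish both assertions of the proposition.

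Since $S/I$ is a finitely generated $k$-algebra of Krull dimension $\geq 1$, Noether normalization supplies an injection $k[t_1,\ldots,t_d] \hookrightarrow S/I$ with $d = \operatorname{dim} S/I \geq 1$. Lifting $t_1$ to an element $s \in S$, its image $\bar s \in S/I$ is transcendental over $k$. Also fix some nonzero $a \in I$, which exists since $I \neq 0$. For each $m \geq 0$, define
\[ J_m := (a,\, sa,\, s^2 a,\, \ldots,\, s^m a)\, R. \]
Since $a \in I$ and $I$ is an ideal of $S$, every generator $s^i a$ lies in $I \subseteq R$, so each $J_m$ is an ideal of $R$ contained in $I$.

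The key and essentially only nontrivial step is to verify $J_m \subsetneq J_{m+1}$ for every $m$. Suppose to the contrary that $s^{m+1} a \in J_m$. Then there exist $r_i = \alpha_i + c_i \in R = k + I$, with $\alpha_i \in k$ and $c_i \in I$, such that $s^{m+1} a = \sum_{i=0}^m r_i\, s^i a$. Cancelling $a$ in the integral domain $S$ and rearranging yields
\[ s^{m+1} - \sum_{i=0}^m \alpha_i s^i \;=\; \sum_{i=0}^m c_i s^i \;\in\; I, \]
which contradicts the transcendence of $\bar s$ over $k$. Hence $J_0 \subsetneq J_1 \subsetneq J_2 \subsetneq \cdots$ is a strictly ascending chain of $R$-ideals contained in $I$, proving both that $I$ contains such a chain and that $R$ is nonnoetherian. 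The main point of care, producing the transcendental element $\bar s$, is dispatched by Noether normalization; the remainder is a routine cancellation in $S$.
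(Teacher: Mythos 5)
Your proof is correct, and it takes a genuinely cleaner route than the paper's, though both arguments build the same type of chain. The paper also uses a chain of the shape $(g, gh, gh^2, \ldots)R$ with $g \in I \setminus 0$, but it chooses $h$ only so that $h - \alpha \notin I$ for every $\alpha \in k$ (i.e.\ $\bar h$ is non-constant in $S/I$), which is weaker than transcendence. To force strictness it then passes to maximal ideals $\mathfrak{n}_\alpha \supseteq (I, h - \alpha)$, splits the relevant monic polynomial $t \in k[h]$ into linear factors over $k$, and derives a contradiction by evaluating at a freshly chosen scalar $\alpha'$. Your use of Noether normalization instead produces an element $s \in S$ whose residue $\bar s \in S/I$ is genuinely transcendental over $k$, after which strictness of $(a, sa, s^2a, \ldots)R$ falls out in one line: a relation $s^{m+1}a = \sum r_i s^i a$, after cancelling the nonzero $a$ in the domain $S$ and separating $r_i = \alpha_i + c_i$, would force $\bar s$ to satisfy the monic polynomial $X^{m+1} - \sum \alpha_i X^i$ over $k$. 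This bypasses the Nullstellensatz and the auxiliary maximal ideals entirely. It is also somewhat more robust: the paper's argument rests on $(I, h - \alpha)S$ being proper for the $\alpha$'s it uses, and this can fail for a non-constant $\bar h$ (e.g.\ $S = k[x,y]$, $I = (xy - 1)$, $h = x$, $\alpha = 0$, where $(I, h) = S$ yet $x - \alpha \notin I$ for every $\alpha$); transcendence of $\bar s$ sidesteps that subtlety. One point worth making explicit in your write-up is that Noether normalization applies to arbitrary finitely generated $k$-algebras, not only domains, so it does furnish algebraically independent elements $t_1, \ldots, t_d$ with $d = \dim S/I \geq 1$ even though $I$ is not assumed prime or radical.
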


\begin{proof}
Since $\operatorname{dim}S/I \geq 1$, $I$ is a non-maximal ideal of $S$.
Thus there is a maximal ideal $\mathfrak{n}$ of $S$ for which $\mathfrak{n} \supset I$.
Since $I$ is a maximal ideal of $R$ and $I \subset \mathfrak{n}$, we have
\begin{equation} \label{awesomeness}
\mathfrak{n} \cap R = I.
\end{equation}

Furthermore, since $I$ is a radical of $S$, there are primes $\mathfrak{p}_1, \ldots, \mathfrak{p}_n$ of $S$ such that $I = \mathfrak{p}_1 \cap \cdots \cap \mathfrak{p}_n$, by the Lasker-Noether theorem.
Fix $h \in \mathfrak{n} \setminus (\mathfrak{p}_1 \cup \cdots \cup \mathfrak{p}_n)$. 
Then for $f \in S$, we have
\begin{equation} \label{Zep}
fh \in I \ \ \ \Rightarrow \ \ \ f \in I.
\end{equation}
Indeed, if $fh \in I$, then $fh \in \mathfrak{p}_i$ for each $i$.
Whence $f \in \mathfrak{p}_i$ for each $i$, and therefore $f \in I$. 

By assumption, $I \not = 0$.
Fix $g \in I \setminus 0$, and consider the chain of ideals of $R$,
$$0 \subset gR \subseteq (g,gh)R \subseteq (g,gh,gh^2)R \subseteq \cdots \subseteq I.$$
We claim that each inclusion is proper.
Indeed, assume to the contrary that there is some $\ell \geq 0$ and $r_0, \ldots, r_{\ell} \in R$ such that
$$gh^{\ell+1} = \sum_{j = 0}^{\ell} r_j gh^j.$$
Then since $S$ is an integral domain,
$$h^{\ell+1} = \sum_{j = 0}^{\ell}r_jh^j.$$
Whence
\begin{equation} \label{hop}
h^{\ell+1} - \sum_{j = 1}^{\ell}r_jh^j = r_0 \in R.
\end{equation}
But $h \in \mathfrak{n}$. 
Therefore $r_0 \in \mathfrak{n} \cap R = I$ by (\ref{awesomeness}).
Furthermore, since $R = k + I$, for each $j \in [0, \ell]$ there is some $\beta_j \in k$ and $t_j \in I$ such that $r_j = \beta_j + t_j$.
Since $r_0$ and each $t_jh^j$ are in $I$, (\ref{hop}) yields
\begin{equation} \label{eternal}
t := h^{\ell+1} - \sum_{j=1}^{\ell}\beta_j h^j = r_0 + \sum_{j=1}^{\ell}t_jh^j \in I \subset \mathfrak{n}.
\end{equation}

The left-hand side implies that $t$ is a polynomial in $k[h]$.
Therefore, since $k$ is algebraically closed, $t$ splits
$$t = h^{\ell+1} - \sum_{j=1}^{\ell}\beta_j h^j = h^m(h - \alpha_1) \cdots (h- \alpha_{\ell-m}),$$
where $m \geq 1$ and $\alpha_1, \ldots, \alpha_{\ell-m} \in k \setminus 0$.
Set $f := (h- \alpha_1) \cdots (h -\alpha_{\ell})$.
By (\ref{eternal}) we have $hf = t \in I$.
Thus, by (\ref{Zep}), $f \in I$.
Consequently, $f \in \mathfrak{n}$.
But this is not possible by Hilbert's Nullstellensatz, since $\alpha_1, \ldots, \alpha_{\ell-m}$ are nonzero scalars, $h$ is in $\mathfrak{n}$, and $\mathfrak{n}$ is a maximal ideal of $S$.
\end{proof}

\begin{Proposition} \label{nn1}
Suppose $\operatorname{dim}S/I_i \geq 1$ for some $i$.
Then $R = \cap_i (k + I_i)$ is nonnoetherian.
\end{Proposition}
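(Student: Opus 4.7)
The plan is to adapt the chain construction of Proposition \ref{dim SI} so that the ascending chain lives inside $R$ itself, not merely inside $k + I_1$. Without loss of generality, assume $\operatorname{dim} S/I_1 \geq 1$; the case $n = 1$ is exactly Proposition \ref{dim SI}, so I may assume $n \geq 2$. As in that proposition, since $I_1$ is non-maximal and $k$ is algebraically closed, Hilbert's Nullstellensatz supplies a generator $h = x_j$ of $S$ over $k$ with $h - \alpha \notin I_1$ for every $\alpha \in k$.

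The key modification is to pick the other generator of the chain inside the full intersection. Since $S$ is an integral domain and each $I_i$ is nonzero, the product $\prod_i I_i$ is nonzero, so I can choose some $g \in (\cap_i I_i) \setminus 0$. Because $\cap_i I_i \subseteq \cap_i (k + I_i) = R$ and $(\cap_i I_i) \cdot S \subseteq \cap_i I_i$, every product $g h^{\ell}$ lies in $R$, and I would consider the ascending chain of ideals of $R$
$$gR \subseteq (g, gh)R \subseteq (g, gh, gh^2)R \subseteq \cdots \subseteq \cap_i I_i.$$

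To show strict ascent, suppose $g h^{\ell+1} = \sum_{j=0}^{\ell} r_j g h^j$ for some $r_j \in R$. Cancelling $g$ in the integral domain $S$ gives $h^{\ell+1} = \sum_{j=0}^{\ell} r_j h^j$. Since $R \subseteq k + I_1$, write $r_j = \beta_j + t_j$ with $\beta_j \in k$ and $t_j \in I_1$; then
$$f(h) := h^{\ell+1} - \sum_{j=0}^{\ell} \beta_j h^j = \sum_{j=0}^{\ell} t_j h^j$$
is a monic polynomial of degree $\ell + 1$ lying in $k[h] \cap I_1$. Factoring $f(h) = \prod_{j=0}^{\ell}(h - \alpha_j)$ over the algebraically closed field $k$ and choosing $\alpha' \in k \setminus \{\alpha_0, \ldots, \alpha_\ell\}$, I would then, exactly as in Proposition \ref{dim SI}, find $\mathfrak{n}_{\alpha'} \in \operatorname{Max} S$ with $(I_1, h - \alpha') \subseteq \mathfrak{n}_{\alpha'}$; then $f(h) \in I_1 \subseteq \mathfrak{n}_{\alpha'}$, while every factor $h - \alpha_j \equiv \alpha' - \alpha_j \not\equiv 0 \pmod{\mathfrak{n}_{\alpha'}}$ avoids $\mathfrak{n}_{\alpha'}$, forcing $f(h) \notin \mathfrak{n}_{\alpha'}$, a contradiction.

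The only real obstacle is ensuring the chain lives in $R$: unlike in Proposition \ref{dim SI}, an arbitrary $g \in I_1$ need not lie in $R$, and the products $gh^\ell$ need not either. Placing $g$ in the smaller ideal $\cap_i I_i$ resolves both issues at once, after which the polynomial argument from Proposition \ref{dim SI} carries through essentially verbatim.
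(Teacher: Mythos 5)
Your proof is correct, and it takes a structurally different route from the paper's. The paper's proof of Proposition~\ref{nn1} treats Proposition~\ref{dim SI} as a black box: it extracts a strict ascending chain $J_1 \subset J_2 \subset \cdots$ of ideals of $k + I_i$ inside $I_i$, observes that each $J_\ell$ is an $R$-module since $R \subseteq k + I_i$, and then multiplies the whole chain by a fixed nonzero $a \in \cap_j I_j$ to land the ideals $aJ_\ell$ inside $\cap_j I_j \subset R$; strictness of the new chain $aJ_1 \subseteq aJ_2 \subseteq \cdots$ is immediate from cancellation in the integral domain $S$. You instead re-run the chain construction from scratch, but with the generator $g$ chosen inside $\cap_i I_i$ from the outset so the chain $gR \subseteq (g,gh)R \subseteq \cdots$ already lives in $R$. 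Your version buys a somewhat cleaner polynomial argument: by splitting \emph{all} the coefficients $r_j = \beta_j + t_j$ (including $j=0$), you obtain $f(h) = h^{\ell+1} - \sum_{j=0}^\ell \beta_j h^j \in I_1$ directly and avoid the separate step in the paper's Proposition~\ref{dim SI} where $r_0 \in R$ must first be placed in $I$ via the auxiliary maximal ideal $\mathfrak{n}_0$. The paper's version buys modularity and brevity, since the polynomial argument is quarantined in Proposition~\ref{dim SI} and Proposition~\ref{nn1} reduces to a two-line transport-and-cancel step. Both are fully rigorous; the nonvacuity of $\cap_i I_i$ (needed for your choice of $g$, and for the paper's choice of $a$) follows in both cases from $S$ being a domain with each $I_i$ nonzero.
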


\begin{proof}
Suppose $\operatorname{dim}S/I_i \geq 1$.
By Proposition \ref{dim SI}, $I_i$ contains a strict infinite ascending chain of ideals of $k + I_i$,
$$J_1 \subset J_2 \subset J_3 \subset \cdots \subset I_i.$$

(i) We claim that each $J_{\ell}$ is an $R$-module.
Let $r \in R$.
Then $r \in k + I_i$.
Whence $J_{\ell}r \subseteq J_{\ell}$ since $J_{\ell}$ is an ideal of $k + I_i$, proving our claim.

(ii) Now let $a \in \cap_j I_j$.
Then each $aJ_{\ell}$ is in $\cap_j I_j \subset R$.
Thus each $aJ_{\ell}$ is an ideal of $R$ by Claim (i).

Consider the chain of ideals of $R$,
\begin{equation} \label{ac}
aJ_1 \subseteq aJ_2 \subseteq aJ_3 \subseteq \cdots.
\end{equation}
Assume to the contrary that for some $\ell$,
$$aJ_{\ell} = aJ_{\ell+1}.$$
Then for each $b \in J_{\ell+1} \setminus J_{\ell}$, there is some $c \in J_{\ell}$ such that
$$ab = ac.$$
But $S$ is an integral domain.
Whence
$$b = c \in J_{\ell},$$
a contradiction to our choice of $b$.
Thus the chain (\ref{ac}) is strict.
Therefore $R$ is nonnoetherian.
\end{proof}

We recall the following elementary facts.

\begin{Lemma} \label{elementary}
Let $R$ be an integral domain, and let $\mathfrak{p}, \mathfrak{m} \in \operatorname{Spec}R$ be ideals satisfying $\mathfrak{p} \subseteq \mathfrak{m}$.
Then\footnote{We prove Lemma \ref{elementary} for completeness.

(1) It suffices to show that $\mathfrak{p}R_{\mathfrak{m}} \cap R \subseteq \mathfrak{p}$.
Let $\frac ab \in \mathfrak{p}R_{\mathfrak{m}} \cap R$, with $a \in \mathfrak{p}$ and $b \in R \setminus \mathfrak{m}$.
Then
$$b \cdot \frac ab = a \in \mathfrak{p}.$$
Thus, since $b, \frac ab \in R$ and $\mathfrak{p}$ is prime in $R$, we have $b \in \mathfrak{p}$ or $\frac ab \in \mathfrak{p}$.
But $b \not \in \mathfrak{p}$ since $b \not \in \mathfrak{m}$ and $\mathfrak{p} \subseteq \mathfrak{m}$.
Therefore $\frac ab \in \mathfrak{p}$.

(2) Let $\frac{a_1}{b_1}, \frac{a_2}{b_2} \in R_{\mathfrak{m}}$, with $a_1,a_2 \in R$ and $b_1,b_2 \in R \setminus \mathfrak{m}$.
Suppose
$$\frac{a_1}{b_1} \cdot \frac{a_2}{b_2} \in \mathfrak{p} R_{\mathfrak{m}}.$$
We claim that $\frac{a_1}{b_1}$ or $\frac{a_2}{b_2}$ is in $\mathfrak{p}R_{\mathfrak{m}}$.
Indeed, there is some $c \in \mathfrak{p}$ and $d \in R \setminus \mathfrak{m}$ such that
$$\frac{a_1}{b_1} \cdot \frac{a_2}{b_2} = \frac cd.$$
Whence
$$a_1a_2d = b_1b_2c \in \mathfrak{p}.$$
Now $d \not \in \mathfrak{p}$ since $d \not \in \mathfrak{m}$ and $\mathfrak{p} \subseteq \mathfrak{m}$.
Thus $a_1a_2 \in \mathfrak{p}$ since $\mathfrak{p}$ is prime in $R$.
In particular, $a_1 \in \mathfrak{p}$ or $a_2 \in \mathfrak{p}$; say $a_1 \in \mathfrak{p}$.
Then $\frac{a_1}{b_1} \in \mathfrak{p}R_{\mathfrak{m}}$, proving our claim.}
\begin{enumerate}
 \item $\mathfrak{p}R_{\mathfrak{m}} \cap R = \mathfrak{p}$.
 \item $\mathfrak{p}R_{\mathfrak{m}} \in \operatorname{Spec}R_{\mathfrak{m}}$.
\end{enumerate}
\end{Lemma}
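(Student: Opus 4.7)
The plan is to verify both parts by direct manipulation with fractions, leveraging only that $\mathfrak{p}$ is prime in $R$ and that every denominator lies outside $\mathfrak{m}$, hence outside $\mathfrak{p}$ (since $\mathfrak{p} \subseteq \mathfrak{m}$).

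For part (1), I would note the inclusion $\mathfrak{p} \subseteq \mathfrak{p}R_{\mathfrak{m}} \cap R$ is immediate (embed $a \in \mathfrak{p}$ as $a/1$). For the reverse inclusion, I would take a typical element $a/b \in \mathfrak{p}R_{\mathfrak{m}}$ with $a \in \mathfrak{p}$ and $b \in R \setminus \mathfrak{m}$ that happens to lie in $R$, multiply through by $b$ to get $b \cdot (a/b) = a \in \mathfrak{p}$, and then use primality of $\mathfrak{p}$ in $R$ together with $b \notin \mathfrak{m} \supseteq \mathfrak{p}$ to conclude $a/b \in \mathfrak{p}$.

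For part (2), I would take two elements $a_1/b_1, a_2/b_2 \in R_{\mathfrak{m}}$ whose product lies in $\mathfrak{p}R_{\mathfrak{m}}$, write that product as $c/d$ with $c \in \mathfrak{p}$ and $d \in R \setminus \mathfrak{m}$, cross-multiply to land in $R$, and obtain $a_1 a_2 d \in \mathfrak{p}$. Since $d \notin \mathfrak{m} \supseteq \mathfrak{p}$ and $\mathfrak{p}$ is prime in $R$, one of $a_1, a_2$ lies in $\mathfrak{p}$, and the corresponding fraction lies in $\mathfrak{p}R_{\mathfrak{m}}$. Properness of $\mathfrak{p}R_{\mathfrak{m}}$ follows because $1 \in \mathfrak{p}R_{\mathfrak{m}}$ would, by part (1), force $1 \in \mathfrak{p}$, contradicting $\mathfrak{p}$ being prime.

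There is no real obstacle here; the whole content is routine localization bookkeeping, and the only subtlety to flag is that the hypothesis $\mathfrak{p} \subseteq \mathfrak{m}$ is used exactly once in each part, precisely to exclude denominators from $\mathfrak{p}$. I would keep the exposition brief and in the style of the surrounding footnoted argument, since the author has already indicated the proof is included purely for completeness.
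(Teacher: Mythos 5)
Your proposal is correct and follows essentially the same route as the paper's footnoted proof: reduce to a single fraction $a/b$ with $a \in \mathfrak{p}$ and $b \in R \setminus \mathfrak{m}$, clear denominators, and invoke primality of $\mathfrak{p}$ in $R$ together with the observation that denominators avoid $\mathfrak{p}$ because $\mathfrak{p} \subseteq \mathfrak{m}$. The one thing you add that the paper leaves implicit is the check that $\mathfrak{p}R_{\mathfrak{m}}$ is proper (via part (1), since $1 \in \mathfrak{p}R_{\mathfrak{m}} \cap R$ would force $1 \in \mathfrak{p}$), which is a worthwhile remark since an ideal must be proper to be prime.
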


Again let $R = \cap_i (k + I_i)$.

\begin{Lemma} \label{double stars}
If $\mathfrak{p} \in \operatorname{Spec}R$ and $\mathfrak{p} \subseteq I_i$ for some $i$, then
$$\mathfrak{p}S \cap R = \mathfrak{p}.$$
\end{Lemma}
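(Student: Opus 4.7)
The plan is to show the nontrivial inclusion $\mathfrak{p}S \cap R \subseteq \mathfrak{p}$ by multiplying elements of $\mathfrak{p}S \cap R$ by a well-chosen ``universal denominator'' $c$ drawn from $\cap_j I_j$.

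First I would dispense with the case $\mathfrak{p} = I_i \cap R$ directly: since $I_i$ is an $S$-ideal containing $\mathfrak{p}$, we have $\mathfrak{p}S \subseteq I_i$, whence $\mathfrak{p}S \cap R \subseteq I_i \cap R = \mathfrak{p}$. So assume $\mathfrak{p} \subsetneq I_i \cap R$.

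The key observation is that $\cap_j I_j$ is an $S$-ideal that happens to sit inside $R = \cap_\ell (k + I_\ell)$, since any element of $\cap_j I_j$ lies in each $I_\ell \subseteq k + I_\ell$. Consequently, for any $c \in \cap_j I_j$ and any $s \in S$, the product $cs$ lands in $\cap_j I_j \subseteq R$. Given $y = \sum_\ell p_\ell s_\ell \in \mathfrak{p}S \cap R$, this would make each $p_\ell (c s_\ell)$ an element of $\mathfrak{p} \cdot R \subseteq \mathfrak{p}$, so $cy \in \mathfrak{p}$; if additionally $c \notin \mathfrak{p}$, primality of $\mathfrak{p}$ would force $y \in \mathfrak{p}$.

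The main obstacle is therefore producing such a $c$, i.e., verifying that $\cap_j I_j \not\subseteq \mathfrak{p}$. I would argue by contradiction. If $\cap_j I_j \subseteq \mathfrak{p}$, then because a product $c_1 \cdots c_n$ with $c_j \in I_j \cap R$ belongs to every $I_\ell$ (using that each $I_\ell$ is an $S$-ideal), one has
$$\prod_j (I_j \cap R) \subseteq \cap_j I_j \subseteq \mathfrak{p}.$$
Primality of $\mathfrak{p}$ would then yield some $I_{j'} \cap R \subseteq \mathfrak{p}$. By Proposition \ref{star}, $I_{j'} \cap R$ is maximal in $R$, so $\mathfrak{p} = I_{j'} \cap R$. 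Combined with $\mathfrak{p} \subseteq I_i \cap R$ and maximality of both, this forces $I_{j'} \cap R = I_i \cap R$, and the distinctness half of Proposition \ref{star} then forces $j' = i$, so $\mathfrak{p} = I_i \cap R$, contradicting our assumption. Hence the desired $c$ exists and the argument above closes.
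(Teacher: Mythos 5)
Your proof is correct, but it follows a genuinely different route from the paper's. The paper invokes Lemma \ref{Andy} to produce $c \in \bigl( \cap_{j \neq i} I_j \bigr) \cap R \setminus I_i$; since $\mathfrak{p} \subseteq I_i$, this $c$ is automatically outside $\mathfrak{p}$ with no further argument. The price is that $c$ itself is not in $\cap_j I_j$, so the paper must instead observe that $ac \in \cap_j I_j$ for $a \in \mathfrak{p}$ before multiplying by elements of $S$, and the resulting identity $(ab)^2 c = a \cdot (acb^2) \in \mathfrak{p}$ needs primality applied twice to strip off both the $c$ and the square. You instead pick $c \in \cap_j I_j$ directly, which makes $cS \subseteq R$ and hence $cy \in \mathfrak{p}$ for any $y \in \mathfrak{p}S \cap R$ in one clean step; but because your $c$ lies in $I_i$, you cannot see immediately that $c \notin \mathfrak{p}$, and you have to supply the extra contradiction argument ($\cap_j I_j \subseteq \mathfrak{p}$ forces some $I_{j'} \cap R \subseteq \mathfrak{p}$, hence $\mathfrak{p} = I_{j'} \cap R = I_i \cap R$ by maximality and distinctness), together with the initial case split to dispose of $\mathfrak{p} = I_i \cap R$. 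So the trade is: the paper front-loads the work into choosing a $c$ for which nonmembership in $\mathfrak{p}$ is free, at the cost of a slightly fiddlier multiplication step; you take the most natural $c$ and pay for it with a (correct and self-contained) maximality argument. One small point in your favour: you handle a general element $y = \sum_\ell p_\ell s_\ell$ of $\mathfrak{p}S$ as a sum, whereas the paper writes only a single product $ab$ with $a \in \mathfrak{p}$, $b \in S$, which strictly speaking covers only generators rather than arbitrary elements of $\mathfrak{p}S \cap R$.
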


\begin{proof}
Suppose the hypotheses hold.
Let $ab \in \mathfrak{p}S \cap R$, with $a \in \mathfrak{p}$ and $b \in S$.
We claim that $ab \in \mathfrak{p}$.
Indeed, by Lemma \ref{Andy} there is some
$$c \in \left( \cap_{j \not = i} I_j \right) \cap R \setminus I_i.$$
Then $ac \in \cap_j I_j$ since $a \in \mathfrak{p} \subseteq I_i$.
Thus for any $s \in S$,
$$acs \in \cap_j I_j \subset R.$$
In particular,
$$acb^2 \in R.$$
Thus, since $a \in \mathfrak{p}$,
$$(ab)^2 \cdot c = a \cdot (acb^2) \in \mathfrak{p}.$$
But $c \in R \setminus \mathfrak{p}$ and $(ab)^2 \in R$.
Thus $(ab)^2 \in \mathfrak{p}$ since $\mathfrak{p}$ is prime in $R$.
Therefore $ab \in \mathfrak{p}$, again since $\mathfrak{p}$ is prime in $R$.
\end{proof}

\begin{Proposition} \label{surjective}
The morphism
$$\iota: \operatorname{Spec}S \to \operatorname{Spec}R, \ \ \ \ \mathfrak{q} \mapsto \mathfrak{q} \cap R,$$
is surjective.
\end{Proposition}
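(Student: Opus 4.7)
The plan is to establish the identity $\mathfrak{p}S\cap R=\mathfrak{p}$ for every $\mathfrak{p}\in\operatorname{Spec}R$, and then to lift $\mathfrak{p}S$ to a prime of $S$ by a standard Zorn-style argument. I split the identity on whether $\mathfrak{p}$ lies in one of the distinguished ideals $I_i$.

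If $\mathfrak{p}\subseteq I_i$ for some $i$, then $\mathfrak{p}S\cap R=\mathfrak{p}$ is precisely Lemma \ref{double stars}. Otherwise $\mathfrak{p}\not\subseteq I_i$ for every $i$, and since $\mathfrak{p}$ is a proper ideal of $R$ the contrapositive of Lemma \ref{today} furnishes some $\mathfrak{n}\in\mathcal{Z}(\mathfrak{p})\cap U_{S/R}$. Then $\mathfrak{p}\subseteq\mathfrak{n}\cap R$ and $R_{\mathfrak{n}\cap R}=S_\mathfrak{n}$, so
$$\mathfrak{p}S\cap R\ \subseteq\ \mathfrak{p}S_\mathfrak{n}\cap R\ =\ \mathfrak{p}R_{\mathfrak{n}\cap R}\cap R\ =\ \mathfrak{p}$$
by Lemma \ref{elementary}(1), while the reverse containment is immediate.

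With $\mathfrak{p}S\cap R=\mathfrak{p}$ in hand, the ideal $\mathfrak{p}S$ is disjoint from the multiplicative set $R\setminus\mathfrak{p}$. Zorn's lemma applied to the nonempty family of ideals of $S$ containing $\mathfrak{p}S$ and disjoint from $R\setminus\mathfrak{p}$ then produces a maximal such ideal $\mathfrak{q}$, which is automatically prime by the standard maximal-disjoint argument. Since $\mathfrak{q}\supseteq\mathfrak{p}$ we have $\mathfrak{q}\cap R\supseteq\mathfrak{p}$, and disjointness forces $\mathfrak{q}\cap R\subseteq\mathfrak{p}$, giving $\mathfrak{q}\cap R=\mathfrak{p}$. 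The only substantive step is the Case~2 argument: Lemma \ref{today} is exactly what bridges the algebraic hypothesis $\mathfrak{p}\not\subseteq I_i$ to the geometric condition $\mathfrak{n}\in U_{S/R}$ that permits replacing $R$ by $S$ locally, after which everything is bookkeeping.
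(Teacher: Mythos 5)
Your proof is correct, and it takes a cleaner and somewhat more general route than the paper's. Both proofs share the same skeleton: split on whether $\mathfrak{p}$ is contained in some $I_i$ (equivalently, via Lemma~\ref{today}, on whether $\mathcal{Z}(\mathfrak{p})\cap U_{S/R}$ is empty), use Lemma~\ref{double stars} in the first case, and use a $\mathfrak{n}\in\mathcal{Z}(\mathfrak{p})\cap U_{S/R}$ with $R_{\mathfrak{n}\cap R}=S_{\mathfrak{n}}$ in the second. What differs is the lifting step. The paper constructs the prime $\mathfrak{q}$ explicitly in each case: in case (ii) as $\mathfrak{p}S_{\mathfrak{n}}\cap S$, and in case (i) by a lengthier detour through the auxiliary prime $\mathfrak{t}:=\mathfrak{p}(k+I_i)_{I_i}\cap(k+I_i)$ of $k+I_i$, followed by a Lasker--Noether primary decomposition of $\sqrt[S]{\mathfrak{t}S}$ and a prime-avoidance argument to isolate the right minimal component. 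You instead isolate the single contraction identity $\mathfrak{p}S\cap R=\mathfrak{p}$ in both cases and then invoke the standard Zorn argument: $\mathfrak{p}S$ is disjoint from the multiplicative set $R\setminus\mathfrak{p}$, and any ideal of $S$ maximal among those containing $\mathfrak{p}S$ and disjoint from $R\setminus\mathfrak{p}$ is prime and contracts to $\mathfrak{p}$. This is the generic lying-over mechanism; it replaces the paper's appeal to noetherianity of $S$ (Lasker--Noether) with a decomposition-free argument, and it unifies the two cases once the contraction identity is in hand. The paper's case (i) is essentially an avoidable complication: since Lemma~\ref{double stars} already gives $\mathfrak{p}S\cap R=\mathfrak{p}$ there, one could apply Lasker--Noether directly to $\sqrt[S]{\mathfrak{p}S}$ (or just use Zorn, as you do) without introducing $\mathfrak{t}$. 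Your version is the more economical one.
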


\begin{proof}
Let $\mathfrak{p} \in \operatorname{Spec}R$.
We claim that there is some $\mathfrak{q} \in \operatorname{Spec}S$ such that $\mathfrak{q} \cap R = \mathfrak{p}$.

(i) First suppose $\mathcal{Z}(\mathfrak{p}) \cap U_{S/R} = \emptyset$.
Then there is some $i$ for which $\mathfrak{p} \subseteq I_i$, by Lemma \ref{today}.
Set
$$\mathfrak{t} := \mathfrak{p}(k + I_i)_{I_i} \cap (k + I_i).$$
Recall that $I_i \cap R \in \operatorname{Spec}R$ by Proposition \ref{star}.

(i.a) We have $\mathfrak{p} = \mathfrak{t} \cap R$ since
$$\mathfrak{p} \stackrel{\textsc{(i)}}{=} \mathfrak{p}R_{I_i \cap R} \cap R \stackrel{\textsc{(ii)}}{=} \mathfrak{p}(k+I_i)_{I_i} \cap R = \mathfrak{p}(k+I_i)_{I_i} \cap (k + I_i) \cap R = \mathfrak{t} \cap R,$$
where (\textsc{i}) holds by Lemma \ref{elementary}.1, and (\textsc{ii}) holds by Lemma \ref{external}.

(i.b) We claim that
$$\mathfrak{t} \in \operatorname{Spec}(k+I_i) \ \ \ \text{ and } \ \ \ \mathfrak{t} \subseteq I_i.$$
By Lemma \ref{elementary}.2,
$$\mathfrak{p}R_{I_i \cap R} \in \operatorname{Spec}R_{I_i \cap R}.$$
Thus by Lemma \ref{external},
$$\mathfrak{p}(k + I_i)_{I_i} \in \operatorname{Spec}(k+I_i)_{I_i}.$$
Therefore $\mathfrak{t} \in \operatorname{Spec}(k+I_i)$, since the intersection of a prime ideal with a subalgebra is a prime ideal of the subalgebra.

Furthermore,
$$\mathfrak{t} = \mathfrak{p}(k + I_i)_{I_i} \cap (k+I_i) \subseteq I_i(k+I_i)_{I_i} \cap (k+I_i) \stackrel{\textsc{(i)}}{=} I_i,$$
where (\textsc{i}) holds by Lemma \ref{elementary}.1 since $I_i \in \operatorname{Spec}(k+I_i)$.

(i.c) We claim that
$$\mathfrak{p} = \sqrt[S]{\mathfrak{t}S} \cap R.$$
Indeed,
$$\mathfrak{p} \stackrel{\textsc{(i)}}{=} \mathfrak{t} \cap R \subseteq \sqrt[S]{\mathfrak{t}S} \cap R \stackrel{\textsc{(ii)}}{\subseteq} \sqrt[R]{\mathfrak{t}S \cap R}
= \sqrt[R]{\mathfrak{t}S \cap (k+I_i) \cap R} \stackrel{\textsc{(iii)}}{=}
\sqrt[R]{\mathfrak{t} \cap R} \stackrel{\textsc{(iv)}}{=} \sqrt[R]{\mathfrak{p}} = \mathfrak{p},$$
where (\textsc{i}) and (\textsc{iv}) hold by Claim (i.a);  (\textsc{ii}) holds since if $s^n \in tS$ and $s \in R$, then $s \in \sqrt[R]{tS \cap R}$; and (\textsc{iii}) holds by Claim (i.b) together with Lemma \ref{double stars} (with $k+I_i$ in place of $R$).

(i.d) Since $S$ is noetherian, the Lasker-Noether theorem implies that there are ideals $\mathfrak{q}_1, \ldots, \mathfrak{q}_m \in \operatorname{Spec}S$, minimal over $\sqrt[S]{\mathfrak{t}S}$,
such that
$$\sqrt[S]{\mathfrak{t}S} = \mathfrak{q}_1 \cap \cdots \cap \mathfrak{q}_m.$$
Thus
\begin{equation} \label{p =}
\mathfrak{p} \stackrel{\textsc{(i)}}{=} \sqrt[S]{\mathfrak{t}S} \cap R = \left( \mathfrak{q}_1 \cap \cdots \cap \mathfrak{q}_m \right) \cap R = \left( \mathfrak{q}_1 \cap R \right) \cap \cdots \cap \left( \mathfrak{q}_m \cap R \right),
\end{equation}
where (\textsc{i}) holds by Claim (i.c).
Furthermore, each $\mathfrak{q}_j \cap R$ is a prime ideal of $R$ since $\mathfrak{q}_j \in \operatorname{Spec}S$ and $R \subset S$ (e.g., \cite[Lemma 2.1]{B4}).

Assume to the contrary that for each $j \in [1,m]$,
$$\mathfrak{q}_j \cap R \not = \mathfrak{p}.$$
Then for each $j$ there is some
$$a_j \in (\mathfrak{q}_j \cap R) \setminus \mathfrak{p}.$$
Whence
$$a_1 \cdots a_m \in \cap_j (\mathfrak{q}_j \cap R) \stackrel{\textsc{(i)}}{=} \mathfrak{p},$$
where (\textsc{i}) holds by (\ref{p =}).
But $\mathfrak{p}$ is prime in $R$, a contradiction.
Thus there is some $j$ for which
$$\mathfrak{q}_j \cap R = \mathfrak{p}.$$
Our desired ideal is therefore $\mathfrak{q} := \mathfrak{q}_j \in \operatorname{Spec}S$.

(ii) Now suppose $\mathcal{Z}(\mathfrak{p}) \cap U_{S/R} \not = \emptyset$; say $\mathfrak{n} \in \mathcal{Z}(\mathfrak{p}) \cap U_{S/R}$.
Set
$$\mathfrak{q} := \mathfrak{p}S_{\mathfrak{n}} \cap S.$$
We claim that
$$\mathfrak{q} \cap R = \mathfrak{p} \ \ \ \text{ and } \ \ \ \mathfrak{q} \in \operatorname{Spec}S.$$

First observe that
$$\mathfrak{p} \stackrel{\textsc{(i)}}{=} \mathfrak{p}R_{\mathfrak{n} \cap R} \cap R \stackrel{\textsc{(ii)}}{=} \mathfrak{p}S_{\mathfrak{n}} \cap R = \mathfrak{p}S_{\mathfrak{n}} \cap S \cap R = \mathfrak{q} \cap R,$$
where (\textsc{i}) holds by Lemma \ref{elementary}.1, and (\textsc{ii}) holds since $\mathfrak{n} \in U_{S/R}$.
Furthermore, since $\mathfrak{p} \in \operatorname{Spec}R$, we have $\mathfrak{p}R_{\mathfrak{n} \cap R} \in \operatorname{Spec}(R_{\mathfrak{n} \cap R})$ by Lemma \ref{elementary}.2.
Whence $\mathfrak{p}S_{\mathfrak{n}} \in \operatorname{Spec}S_{\mathfrak{n}}$ since $\mathfrak{n} \in U_{S/R}$.
Therefore $\mathfrak{q} = \mathfrak{p}S_{\mathfrak{n}} \cap S \in \operatorname{Spec}S$.
\end{proof}

\begin{Theorem} \label{main'}
Let $I_1, \ldots, I_n$ be a set of proper non-maximal nonzero radical ideals of $S$ which are pairwise coprime, and set $R := \cap_i (k + I_i)$.
Then
\begin{enumerate}
 \item $R$ is nonnoetherian if and only if there is some $i$ for which $\operatorname{dim}S/I_i \geq 1$.
 \item $R$ is depicted by $S$ if and only if for each $i$, $\operatorname{dim}S/I_i \geq 1$.
\end{enumerate}
\end{Theorem}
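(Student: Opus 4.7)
The plan is to assemble Theorem \ref{main'} from the propositions already proved, supplementing with one extra localization argument.

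Part (1) falls out immediately: Proposition \ref{internal}(1) gives one direction (if all $\operatorname{dim}S/I_i = 0$ then $R$ is noetherian, i.e.\ the contrapositive of the forward implication) and Proposition \ref{nn1} gives the reverse implication. So no new work is needed for part (1).

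For the forward direction of part (2), I will argue by contrapositive. Suppose $\operatorname{dim}S/I_j = 0$ for some $j$. Since $I_j$ is proper, choose any maximal $\mathfrak{n} \in \mathcal{Z}(I_j)$. By Proposition \ref{dim SI2}, $\mathfrak{n} \notin U_{S/R}$. On the other hand, $I_j \cap R$ is maximal in $R$ by Proposition \ref{star}, so $\mathfrak{n} \cap R = I_j \cap R$, and therefore $R_{\mathfrak{n}\cap R} = R_{I_j \cap R}$, which is noetherian by Proposition \ref{internal}(2). Thus $\mathfrak{n}$ lies in the noetherian locus but not in $U_{S/R}$, which violates the depiction condition of Definition \ref{def dep}.

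For the reverse direction of part (2), assume $\operatorname{dim}S/I_i \geq 1$ for every $i$, and verify the three parts of Definition \ref{def dep}. Surjectivity of $\iota$ is Proposition \ref{surjective}. Nonemptiness of $U_{S/R}$ follows because $U_{S/R} = (\cup_i \mathcal{Z}(I_i))^c$ by Proposition \ref{dim SI2}, each $\mathcal{Z}(I_i)$ is a proper closed subset of $\operatorname{Max}S$ (as $I_i$ is nonzero and $S$ is a finitely generated domain, so has trivial Jacobson radical), and a finite union of proper closed subsets of the irreducible space $\operatorname{Max}S$ is still proper. For the equality $U_{S/R} = \{\mathfrak{n} : R_{\mathfrak{n}\cap R} \text{ noetherian}\}$, the inclusion $\subseteq$ is immediate since $R_{\mathfrak{n}\cap R} = S_{\mathfrak{n}}$ on $U_{S/R}$ and $S_{\mathfrak{n}}$ is noetherian. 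For the inclusion $\supseteq$, suppose $\mathfrak{n} \notin U_{S/R}$; then $\mathfrak{n} \supseteq I_i$ for some $i$ by Proposition \ref{dim SI2}, and Proposition \ref{star} together with Lemma \ref{external} gives $R_{\mathfrak{n}\cap R} = (k+I_i)_{I_i}$.

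The main obstacle is showing this last ring $(k+I_i)_{I_i}$ is nonnoetherian whenever $\operatorname{dim}S/I_i \geq 1$, which is the only ingredient not already recorded. My plan is to reuse the chain $(g, gh, \ldots, gh^\ell)$ built in Proposition \ref{dim SI} and check that it remains strictly ascending after localizing at $I_i$. If equality $gh^\ell = \sum_{j<\ell} c_j gh^j$ held in $(k+I_i)_{I_i}$ with $c_j \in (k+I_i)_{I_i}$, then clearing denominators by a single $s \in (k+I_i) \setminus I_i$ gives $s\cdot gh^\ell = \sum r_j gh^j$ in $k+I_i$, and since $S$ is a domain we may cancel $g$ to obtain $sh^\ell = \sum r_j h^j$ in $S$. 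Writing $s = \beta + t$ with $\beta \in k^\times$, $t \in I_i$, and $r_j = \gamma_j + u_j$ with $\gamma_j \in k$, $u_j \in I_i$, the identity rearranges to $\beta h^\ell - \sum \gamma_j h^j \in I_i$; dividing by $\beta$ yields a monic element of $k[h] \cap I_i$ of degree $\ell$, contradicting the choice of $h$ via exactly the algebraic-closure/infinite-field argument used at the end of Proposition \ref{dim SI}. This shows $(k+I_i)_{I_i}$ is nonnoetherian, completing the depiction verification.
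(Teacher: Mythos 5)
Your proof is correct and takes essentially the same route as the paper: part (1) is assembled from Propositions \ref{internal}(1) and \ref{nn1}; for part (2), surjectivity comes from Proposition \ref{surjective}, the identification of $U_{S/R}$ and its nonemptiness from Proposition \ref{dim SI2}, the forward direction from Proposition \ref{internal}(2), and the reverse direction is reduced via Proposition \ref{star} and Lemma \ref{external} to showing $(k+I_i)_{I_i}$ is nonnoetherian when $\operatorname{dim}S/I_i \geq 1$. The one place you go beyond the paper is worth flagging: the paper handles that last step by citing the strict ascending chain inside $I_i \subset k + I_i$ from Proposition \ref{dim SI} and immediately concluding ``therefore the localization $(k+I_i)_{I_i}$ is nonnoetherian,'' but in general a strict ascending chain inside a prime $\mathfrak{p}$ of a ring $T$ can collapse in $T_{\mathfrak{p}}$ (for instance $(x_1x_2) \subsetneq (x_1x_2, x_1x_3) \subsetneq \cdots$ inside $(x_1) \subset k[x_1,x_2,\ldots]$ collapses in the DVR $k[x_1,x_2,\ldots]_{(x_1)}$). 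Your denominator-clearing argument --- clear to a single $s \in (k+I_i)\setminus I_i$, cancel $g$ using that $S$ is a domain, write $s = \beta + t$ with $\beta \in k^{\times}$, and re-run the monic-splitting and infinite-field argument from Proposition \ref{dim SI} --- is exactly the justification that this step requires, and tightens what is a genuine (if small) gap in the paper's proof.
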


\begin{proof}
(1): The implications $\Rightarrow$ and $\Leftarrow$ are respectively Propositions \ref{internal}.1 and \ref{nn1}.

(2): The morphism $\iota: \operatorname{Spec}S \to \operatorname{Spec}R$ is surjective by Proposition \ref{surjective}.
Furthermore, $U_{S/R}$ is nonempty since $U_{S/R} = \left( \cup_i \mathcal{Z}(I_i) \right)^c$ is an open dense subset of $\operatorname{Max}S$, by Proposition \ref{dim SI2}.
It thus suffices to show that
\begin{equation} \label{suffices}
U_{S/R}^c = \cup_i \mathcal{Z}(I_i) \subseteq \left\{ \mathfrak{n} \in \operatorname{Max}S \ | \ R_{\mathfrak{n} \cap R} \text{ is nonnoetherian} \right\},
\end{equation}
where the inclusion holds if and only if $\operatorname{dim}S/I_i \geq 1$ for each $i$.

Suppose $\mathfrak{n} \in \cup_i \mathcal{Z}(I_i)$.
Then $\mathfrak{n}$ contains some $I_j$.
Whence $\mathfrak{n} \cap R = I_j \cap R$ by Proposition \ref{star}.
Thus by Lemma \ref{external},
$$R_{\mathfrak{n} \cap R} = R_{I_j \cap R} = (k + I_j)_{I_j}.$$

$\bullet$ First suppose $\operatorname{dim}S/I_j = 0$.
Then $R_{\mathfrak{n} \cap R} = R_{I_j \cap R}$ is noetherian by Proposition \ref{internal}.2.
Therefore the inclusion in (\ref{suffices}) does not hold.

$\bullet$ Now suppose $\operatorname{dim}S/I_j \geq 1$.
Then $I_j$ contains a strict infinite ascending chain of ideals of $k + I_j$, by Proposition \ref{dim SI}.
Therefore the localization $R_{\mathfrak{n} \cap R} = (k + I_j)_{I_j}$ is nonnoetherian.
In particular, if $\operatorname{dim}S/I_i \geq 1$ for each $i$, then the inclusion in (\ref{suffices}) holds.
\end{proof}

\begin{Corollary}
If $\operatorname{dim}S/I_i \geq 1$ for each $i$, then each of the closed points $I_i \cap R$ of $\operatorname{Spec}R$ has positive geometric dimension.
\end{Corollary}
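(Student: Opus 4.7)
The plan is to unwind the definition of geometric dimension and reduce to finding, for each $i$, a prime $\mathfrak{q} \in \operatorname{Spec} S$ lying over $I_i \cap R$ with small height.

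First I would invoke Theorem \ref{main'}(2) to conclude that under the hypothesis $\operatorname{dim} S/I_i \geq 1$ for all $i$, $S$ is a depiction of $R$. By the cited fact \cite[Theorem 2.5]{B1}, this gives $\operatorname{dim} R = \operatorname{dim} S$, so that $\operatorname{gdim}(I_i \cap R) = \operatorname{dim} S - \operatorname{ght}(I_i \cap R)$. It therefore suffices to produce, for each $i$, a prime $\mathfrak{q} \in \operatorname{Spec} S$ with $\iota(\mathfrak{q}) = I_i \cap R$ and $\operatorname{ht}_S(\mathfrak{q}) \leq \operatorname{dim} S - 1$.

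Next I would exhibit $\mathfrak{q}$ as a suitably chosen minimal prime over $I_i$. Since $I_i$ is radical and $S$ is noetherian, $I_i$ is the intersection of its finitely many minimal primes $\mathfrak{q}_1, \ldots, \mathfrak{q}_m$, and
\[
\operatorname{dim} S/I_i = \max_j \operatorname{dim} S/\mathfrak{q}_j \geq 1,
\]
so some $\mathfrak{q} := \mathfrak{q}_j$ satisfies $\operatorname{dim} S/\mathfrak{q} \geq 1$. Using the standard identity $\operatorname{ht}_S(\mathfrak{q}) = \operatorname{dim} S - \operatorname{dim} S/\mathfrak{q}$ (recalled in the footnote of Definition \ref{def dep}), this gives $\operatorname{ht}_S(\mathfrak{q}) \leq \operatorname{dim} S - 1$.

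Finally I would verify $\mathfrak{q} \cap R = I_i \cap R$. Since $\mathfrak{q} \supseteq I_i$ we have $\mathfrak{q} \cap R \supseteq I_i \cap R$; since $\mathfrak{q}$ is a proper prime of $S$, $\mathfrak{q} \cap R$ is a proper prime of $R$; and $I_i \cap R$ is a maximal ideal of $R$ by Proposition \ref{star}. Hence the containment is an equality. Combining, $\operatorname{ght}(I_i \cap R) \leq \operatorname{ht}_S(\mathfrak{q}) \leq \operatorname{dim} S - 1 = \operatorname{dim} R - 1$, so $\operatorname{gdim}(I_i \cap R) \geq 1 > 0$. No step here presents a real obstacle; the only subtlety worth flagging is the use of the depiction property (Theorem \ref{main'}(2)) to access the geometric height via $S$ in the first place, together with the equality $\operatorname{dim} R = \operatorname{dim} S$ that this provides.
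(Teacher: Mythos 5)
Your proposal is correct and follows essentially the same route as the paper: invoke Theorem \ref{main'}(2) to get the depiction, then bound the geometric height via a minimal prime over $I_i$. The paper's version is terser and records the slightly sharper bound $\operatorname{gdim}(I_i \cap R) \geq \operatorname{dim}S/I_i$ (obtained by choosing the minimal prime over $I_i$ of largest coheight), whereas you only extract $\geq 1$; but the extra detail you supply (the citation of $\operatorname{dim}R = \operatorname{dim}S$ from \cite[Theorem 2.5]{B1}, and the check that $\mathfrak{q} \cap R = I_i \cap R$ via maximality of $I_i \cap R$ from Proposition \ref{star}) is exactly what the paper's one-line proof implicitly relies on.
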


\begin{proof}
By Theorem \ref{main'}, $S$ is a depiction of $R$.
Therefore for each $i$,
$$\operatorname{gdim}(I_i \cap R) \geq \operatorname{dim}S/I_i \geq 1.$$
\end{proof}

\section{Sheaves of depictions} \label{sheaves}

Let $(X, \mathcal{O})$ be an affine scheme, and set $R := \mathcal{O}(X)$.
We introduce the following definition.

\begin{Definition} \rm{
A \textit{sheaf of depictions} $\tilde{S}$ on $(X, \mathcal{O})$ is a sheaf of algebras such that on each principal open set $D(a) \subset X$, $a \in R$, the algebra $\tilde{S}(D(a))$ is a depiction of $\mathcal{O}(D(a))$.
}\end{Definition}

A sheaf $\mathcal{M}$ on $X$ is said to be a sheaf of modules if, on each open set 
$U \subset X$, $\mathcal{M}(U)$ is an $\mathcal{O}(U)$-module, and for each inclusion of open sets $U \subset V$, the restriction $\mathcal{M}(V) \to \mathcal{M}(U)$ is an $\mathcal{O}(V)$-module homomorphism.
The sheafification of an $R$-module $M$ is the sheaf of modules $\tilde{M}$ defined on each principal open set $D(a)$ by
$$\tilde{M}(D(a)) := M \otimes_{\mathcal{O}(X)} \mathcal{O}(D(a)) = M \otimes_R R[a^{-1}],$$
and on a general open set $U$ by the inverse limit
$$\tilde{M}(U) := \lim_{\substack{\longleftarrow\\ D(a) \subset U}} \tilde{M}(D(a)).$$ 
In this section we show that the sheafification of a depiction is a sheaf of depictions.

Let $S$ be an integral domain and $k$-algebra.
For an element $a \in S$ and ideal $I \subset S$, set $S_a := S[a^{-1}]$ and $I_a := IS[a^{-1}]$.

\begin{Lemma} \label{yup}
Fix $a \in S$.
\begin{enumerate}
 \item If $\mathfrak{q} \in \operatorname{Spec}S$ and $a \not \in \mathfrak{q}$, then $\mathfrak{q}_a \in \operatorname{Spec}S_a$.
 \item If $\mathfrak{n} \in \operatorname{Max}S$ and $a \not \in \mathfrak{n}$, then $\mathfrak{n}_a \in \operatorname{Max}S_a$.
\end{enumerate}
\end{Lemma}

\begin{proof}
(1) Suppose $\mathfrak{q} \in \operatorname{Spec}S$ and $a \not \in \mathfrak{q}$.
Since $S_a$ is a flat $S$-module, the short exact sequence $0 \to \mathfrak{q} \to S \to S/\mathfrak{q} \to 0$ induces the short exact sequence
$$0 \to \mathfrak{q} \otimes_S S_a \to S \otimes_S S_a \cong S_a \to S/\mathfrak{q} \otimes_S S_a \to 0.$$
Whence
\begin{equation} \label{hope}
S/\mathfrak{q} \otimes_S S_a \cong S_a/\mathfrak{q}_a.
\end{equation}
But $S/\mathfrak{q}$ is an integral domain since $\mathfrak{q}$ is prime.
Furthermore, $S/\mathfrak{q} \otimes S_a$ is not the zero ring since $a^n \not \in \mathfrak{q}$ for all $n \geq 0$.
Thus $S/\mathfrak{q} \otimes S_a$ is also an integral domain.
Therefore $\mathfrak{q}_a$ is a prime of $S_a$, by (\ref{hope}).

(2) Suppose $\mathfrak{n} \in \operatorname{Max}S$ and $a \not \in \mathfrak{n}$.
By Claim (1), we have
\begin{equation*} \label{lost}
S/\mathfrak{n} \otimes_S S_a \cong S_a/\mathfrak{n}_a \not = 0.
\end{equation*}
Furthermore, $S/\mathfrak{n} \otimes S_a$ is a field since $\mathfrak{n}$ is a maximal ideal of $S$.
Consequently, $\mathfrak{n}_a$ is a maximal ideal of $S_a$.
\end{proof}

Let $R$ be a subalgebra of $S$.

\begin{Lemma} \label{where}
Fix $a \in R$.
If
$$\iota_{S/R}: \operatorname{Spec}S \to \operatorname{Spec}R$$
is surjective, then so is
$$\iota_{S_a/R_a}: \operatorname{Spec}S_a \to \operatorname{Spec}R_a.$$
\end{Lemma}

\begin{proof}
Suppose $\iota_{S/R}$ is surjective.
Let $\tilde{\mathfrak{p}} \in \operatorname{Spec}R_a$, and set $\mathfrak{p} := \tilde{\mathfrak{p}} \cap R$.
Then $\mathfrak{p}$ is in $\operatorname{Spec}R$.
Thus there is a prime $\mathfrak{q} \in \operatorname{Spec}S$ such that $\mathfrak{q} \cap R = \mathfrak{p}$, by the surjectivity of $\iota_{S/R}$.
Furthermore, the ideal $\mathfrak{q}_a$ is in $\operatorname{Spec}S_a$, by Lemma \ref{yup}.1.

We want to show that $\mathfrak{q}_a \cap R_a = \tilde{\mathfrak{p}}$, from which the lemma follows.

(i) We first claim that $\mathfrak{q}_a \cap R_a \supseteq \tilde{\mathfrak{p}}$.

Let $g \in \tilde{\mathfrak{p}}$.
Then for $\ell \geq 0$ sufficiently large, $a^{\ell}g$ is in $R$.
Whence $a^{\ell}g \in \tilde{\mathfrak{p}} \cap R = \mathfrak{p}$.
Thus $a^{\ell}g \in \mathfrak{q}$.
Therefore $g = a^{-\ell}a^{\ell}g \in \mathfrak{q}_a$.

(ii) We now claim that $\mathfrak{q}_a \cap R_a \subseteq \tilde{\mathfrak{p}}$.

Let $g \in \mathfrak{q}_a \cap R_a$.
Then again for $\ell \geq 0$ sufficiently large, $a^{\ell}g$ is in $\mathfrak{q}$ and $R$.
Thus,
$$a^{\ell}g \in \mathfrak{q} \cap R = \mathfrak{p} = \tilde{\mathfrak{p}} \cap R.$$
Consequently, $g = a^{-\ell}a^{\ell}g \in \tilde{\mathfrak{p}}$.
\end{proof}

\begin{Proposition} \label{what}
Fix $a \in R$.
If $S$ is a depiction of $R$, then $S_a$ is a depiction of $R_a$.
\end{Proposition}

\begin{proof}
Suppose $S$ is a depiction of $R$.

(i) The morphism $\iota_{S_a/R_a}: \operatorname{Spec}S_a \to \operatorname{Spec}R_a$ is surjective by Lemma \ref{where}.

(ii) Let $\mathfrak{n} \in \operatorname{Max}S_a$, and suppose $(R_a)_{\mathfrak{n} \cap R_a}$ is noetherian. 
We claim that 
$$(R_a)_{\mathfrak{n} \cap R_a} = (S_a)_{\mathfrak{n}}.$$

Since $\mathfrak{n}$ is a proper ideal of $S_a$, we have $\mathfrak{n} \not \ni a$.
Therefore
$$(R_a)_{\mathfrak{n} \cap R_a} \stackrel{\textsc{(i)}}{=} R_{\mathfrak{n} \cap R} \stackrel{\textsc{(ii)}}{=} S_{\mathfrak{n} \cap S} \stackrel{\textsc{(iii)}}{=} (S_a)_{\mathfrak{n}},$$
where (\textsc{i}) and (\textsc{iii}) hold since $a \in R \setminus \mathfrak{n}$; and (\textsc{ii}) holds since $R_{\mathfrak{n} \cap R} = (R_a)_{\mathfrak{n} \cap R_a}$ is noetherian and $S$ is a depiction of $R$.

(iii) Finally, we claim that the locus $U_{S_a/R_a}$ is nonempty.

Let $D_S(a) := \{ \mathfrak{n} \in \operatorname{Max}S \ | \ \mathfrak{n} \not \ni a \}$ denote the complement of the vanishing locus of $a$ in $\operatorname{Max}S$.
Then
$$U_{S_a/R_a} = U_{S/R} \cap D_S(a) \not = \emptyset$$
since $U_{S/R}$ and $D_S(a)$ are open dense sets of $\operatorname{Max}S$.
\end{proof}

\begin{Corollary} 
Suppose $S$ is a depiction of $R$.
Then the sheafification $\tilde{S}$ of the $R$-module $S$ on $\operatorname{Spec}R$ is a sheaf of depictions on $\operatorname{Spec}R$. 
\end{Corollary}

\section{Noncommutative blowups of nonnoetherian singularities} \label{nbons}

Let $S$ be a normal integral domain and a finitely generated $k$-algebra.
Let $Y_1, \ldots, Y_n$ be positive dimensional proper subvarieties of $\operatorname{Max}S$ that intersect the smooth locus.
For each $i \in [1,n]$, denote by $I_i := I(Y_i)$ the corresponding radical ideal of $S$.
Consider the nonnoetherian coordinate ring $R := \cap_i (k + I_i)$ and its set of positive dimensional closed points (Proposition \ref{star}),
$$\mathfrak{m}_i := I_i \cap R \in \operatorname{Spec}R.$$

Following \cite[Section R]{L}, we call the endomorphism ring
$$A := \operatorname{End}_R( _RR \oplus \bigoplus_i \mathfrak{m}_i )$$
the `noncommutative blowup' of $\operatorname{Max}R$ at the points $\mathfrak{m}_1, \ldots, \mathfrak{m}_n$.
These points are precisely the nonnoetherian points of $R$ (that is, the points $\mathfrak{m} \in \operatorname{Max}R$ for which $R_{\mathfrak{m}}$ is nonnoetherian), by Theorem \ref{main'} and Proposition \ref{dim SI2}.
Our main theorem in this section is that if either (i) each $Y_i$ is irreducible, or (ii) $n = 1$, then $A$ is a noncommutative desingularization of its center $R$.
Furthermore, $S$ is the cycle algebra of $A$, and thus $A$ provides a means to retrieve $S$ from the knowledge of $R$ alone.
In particular, $R$ is depicted by the cycle algebra of $A$.  

In the following lemma we do not assume $S$ is normal.

\begin{Lemma} \label{ngu}
Let $I$ be a nonzero ideal of a noetherian integral domain $S$, and suppose $I$ is also an ideal of an overring $T \subset \operatorname{Frac}S$ of $S$.
Then $T$ is contained in the integral closure $\overbar{S}$ of $S$.
\end{Lemma}

\begin{proof}
Let $s \in I \setminus \{0\}$ and $t \in T$.
By assumption, $t^{\ell}s \in I$ for each $\ell \geq 0$.
Consider the ascending chain of ideals of $S$
$$sS \subseteq (s,ts)S \subseteq (s, ts, t^2s)S \subseteq (s, ts, t^2s, t^3s) \subseteq \cdots.$$
Since $S$ is noetherian, there is some $m \geq 1$ and $\sigma_0, \ldots, \sigma_{m-1} \in S$ such that
$$t^{m}s = \sum_{j = 0}^{m-1} \sigma_jt^js.$$
Thus, since $S$ is an integral domain and $s \not = 0$, we have
$$t^{m} - \sum_{j=0}^{m-1} \sigma_j t^j = 0.$$
Consequently, $t$ is in the integral closure $\overbar{S}$ of $S$.
\end{proof}

Again let $S$ be a normal finitely generated domain.
For brevity, set
$$R^i := S \cap \left( \cap_{j \not = i} \left(k + I_j \right) \right).$$
We include $S$ in the intersection for the case $n = 1$.

\begin{Lemma} \label{hom mm}
For each $i \in [1,n]$, we have
$$\operatorname{Hom}_R(\mathfrak{m}_i, \mathfrak{m}_i) = \operatorname{Hom}_R(\mathfrak{m}_i, R) = R^i.$$
\end{Lemma}

\begin{proof}
(i) We first claim that $\operatorname{Hom}_R(\mathfrak{m}_i, R) \subseteq S$.

Indeed, $\operatorname{Hom}_S(I_i, I_i)$ is the largest overring of $S$ for which $I_i$ is an ideal.  
Thus, since $S$ is normal, Lemma \ref{ngu} implies
\begin{equation} \label{ngu2}
\operatorname{Hom}_S(I_i,I_i) \subseteq S.
\end{equation}

Let $x \in \operatorname{Hom}_R(\mathfrak{m}_i, R)$ and $w \in I_1I_2 \cdots I_n$.
Then $x^{\ell}w$ is in $\operatorname{Hom}_S(I_i,I_i)$ for each $\ell \geq 1$, since $wI_i \subseteq \mathfrak{m}_i$.
Whence $x^{\ell}w$ is in $S$ by (\ref{ngu2}).
But since $S$ is a normal noetherian domain, the same argument given in the proof of Lemma \ref{ngu}, with $x$ and $w$ in place of $t$ and $s$, shows that $x$ itself is in $S$.

(ii) We now claim that $\operatorname{Hom}_R(\mathfrak{m}_i, R) \subseteq R$.

Consider $x \in \operatorname{Hom}_R(\mathfrak{m}_i, R)$ and $y \in \mathfrak{m}_i$.
Then for each $j \in [1,n]$, $xy$ is in $k+I_j$.
Furthermore, since $y$ is in $R$, there is a $c \in k$ and $z \in I_j$ such that $y = c + z \in k + I_j$.
In particular, $xz$ is in $I_j$, since $x$ is in $S$ by Claim (i).
Thus $x$ itself is in $k+I_j$, since $cx+xz = xy$ is in $k+I_j$.
But $j$ was arbitrary, and therefore $x$ is in $R$.

(iii) Finally, we claim that $R^i \subseteq \operatorname{Hom}_R(\mathfrak{m}_i, \mathfrak{m}_i)$.

Since $\mathfrak{m}_i \subset R \subseteq k + I_j$ for each $j$, and $R^i \subseteq S$, we have $R^i \mathfrak{m}_i \subseteq R$.
Furthermore, $R^i \subseteq S$ implies $R^i\mathfrak{m}_i \subseteq I_i$.
Therefore $R^i\mathfrak{m}_i \subseteq I_i \cap R = \mathfrak{m}_i$.

(iv) We have
$$R^i \stackrel{\textsc{(i)}}{\subseteq} \operatorname{Hom}_R(\mathfrak{m}_i, \mathfrak{m}_i) \subseteq \operatorname{Hom}_R(\mathfrak{m}_i, R) \stackrel{\textsc{(ii)}}{\subseteq} R \subseteq R^i,$$
where (\textsc{i}) holds by Claim (iii), and (\textsc{ii}) holds by Claim (ii).
\end{proof}

\begin{Lemma} \label{onward}
Let $\mathfrak{p}, \mathfrak{q} \in \operatorname{Spec}R$ be coprime ideals.
Then
$$\operatorname{Hom}_R(\mathfrak{p},\mathfrak{q}) = \mathfrak{q}.$$
\end{Lemma}

\begin{proof}
Since $\mathfrak{p},\mathfrak{q}$ are ideals of $R$, $\operatorname{Hom}_R(\mathfrak{p},\mathfrak{q})$ is isomorphic as an $R$-module to the maximum $R$-module $C \subseteq \operatorname{Frac}R$ satisfying $C \mathfrak{p} \subseteq \mathfrak{q}$.
In particular, $C \supseteq \mathfrak{q}$.

To show the reverse inclusion, let $c \in C$.
Since $\mathfrak{p},\mathfrak{q}$ are coprime, there is an $a \in \mathfrak{p}$ and $b \in \mathfrak{q}$ such that $a + b = 1$.
Whence
$$c(1-b) = ca \in C \mathfrak{p} \subseteq \mathfrak{q}.$$
But $\mathfrak{q}$ is prime and $1-b \not \in \mathfrak{q}$.
Thus, $c \in \mathfrak{q}$.
Therefore $C = \mathfrak{q}$.
\end{proof}

\begin{Proposition} \label{tiled}
There is an algebra isomorphism
\begin{equation} \label{matrix}
A = \operatorname{End}_R( _RR \oplus \bigoplus_i \mathfrak{m}_i ) \cong
\begin{bmatrix}
R  & \mathfrak{m}_1 & \mathfrak{m}_2 & \cdots & \mathfrak{m}_n \\
R^1 & R^1 & \mathfrak{m}_2 & \cdots & \mathfrak{m}_n \\
R^2 & \mathfrak{m}_1 & R^2 & \cdots & \mathfrak{m}_n \\
\vdots & \vdots & \vdots & \ddots & \vdots \\
R^n & \mathfrak{m}_1 & \mathfrak{m}_2 & \cdots & R^n
\end{bmatrix}.
\end{equation}
\end{Proposition}

\begin{proof}
Each $\mathfrak{m}_i$ is a prime ideal of $R$, by Proposition \ref{star}.
Furthermore, for each $i \not = j$, there is some
$$a \in I_i \cap R = \mathfrak{m}_i \ \ \ \text{ and } \ \ \ b \in I_j \cap R = \mathfrak{m}_j$$
such that $a + b = 1$, by Lemma \ref{Andy}.
Thus the set of ideals $\mathfrak{m}_1, \ldots, \mathfrak{m}_n$ are pairwise coprime.
The isomorphism (\ref{matrix}) therefore holds by Lemmas \ref{hom mm} and \ref{onward}.
\end{proof}

\begin{Remark} \rm{
The endomorphism ring of the \textit{right} $R$-module $R \oplus \bigoplus_i \mathfrak{m}_i$ is the transpose of the matrix ring given in (\ref{matrix}), and it is not known whether it is cycle regular.
(As a right (resp.\ left) $R$-module, $R \oplus \bigoplus_i \mathfrak{m}_i$ may be viewed as an $n+1$ column (resp.\ row) vector.)
}\end{Remark}

\begin{Remark} \label{n=1} \rm{
In the case $n = 1$, we have $\mathfrak{m} = I$ (omitting the subscript $i$), and the tiled matrix ring (\ref{matrix}) simplifies to
$$A = \operatorname{End}_R( _RR \oplus I ) \cong
\begin{bmatrix}
R & I \\ S & S
\end{bmatrix}.$$
}\end{Remark}

\begin{Proposition} \label{get S}
The cycle algebra of $A$ is $S$.
\end{Proposition}

\begin{proof}
By Proposition \ref{tiled}, the cycle algebra of $A$ is $\tilde{S} := k[R + R^1 + \cdots + R^n]$.
By Remark \ref{n=1}, it suffices to suppose that $n \geq 2$.

We first claim that for any subset $K \subseteq \{ 1, \ldots, n\}$ with $|K| \geq 2$, we have
\begin{equation} \label{=S}
\sum_{i \in K} \bigcap_{j \in K \setminus \{i \} } I_j = S.
\end{equation}
We proceed by induction on $|K|$.
Let $K \ni 1,2$.

First suppose $|K| = 2$.
Then (\ref{=S}) reduces to $I_1 + I_2 = S$, and this holds since $I_1$ and $I_2$ are coprime ideals of $S$.

Now suppose (\ref{=S}) holds for $|K| \leq N$, and let $|K| = N + 1$.
Set $K_1 := K \setminus \{1 \}$ and $K_2 := K \setminus \{2 \}$.
Then
\begin{equation*}
\begin{split}
S & \stackrel{\textsc{(i)}}{=} I_1 + I_2 = I_1 \cap S + I_2 \cap S \\ & \stackrel{\textsc{(ii)}}{=}
I_1 \cap \left( \sum_{i \in K_1} \bigcap_{j \in K_1 \setminus \{i \} } I_j \right) + I_2 \cap \left( \sum_{i \in K_2} \bigcap_{ j \in K_2 \setminus \{i \} } I_j \right) \\
& \subseteq \sum_{i \in K} \bigcap_{j \in K \setminus \{i\}} I_j \subseteq S,
\end{split}
\end{equation*}
where (\textsc{i}) holds since $I_1$ and $I_2$ are coprime, and (\textsc{ii}) holds by induction.
This proves our claim.

Thus,
$$S = \sum_{i = 1}^n \bigcap_{j \not = i} I_j \subseteq \sum_{i = 1}^n R^i \subseteq \tilde{S} \subseteq S.$$
Therefore $\tilde{S} = S$.
\end{proof}

Fix $1 \leq i \leq n$, and let $\mathfrak{q} \in \operatorname{Spec}S$ be a minimal prime over $\mathfrak{m}_i$.
Since $\mathfrak{m}_i$ is a maximal ideal of $R$, we have
\begin{equation} \label{mi =}
\mathfrak{m}_i = I_i \cap R = \mathfrak{q} \cap R.
\end{equation}

\begin{Lemma} \label{q = I}
Suppose $\mathfrak{q} \in \operatorname{Spec}S$ is a minimal prime over $\mathfrak{m}_i$.
Then $I_i \subseteq \mathfrak{q}$.
Consequently, if $I_i$ is prime in $S$, then $I_i = \mathfrak{q}$.
\end{Lemma}

\begin{proof}
We first claim that $I_i \subseteq \mathfrak{q}$.
Let $a \in S \setminus \mathfrak{q}$; we want to show that $a \not \in I_i$.

Assume to the contrary that $a \in I_i$.
By Lemma \ref{Andy}, there is some $b \in R$ that is in $\cap_{j \not = i}I_j \setminus I_i$.
Whence, $ab \in \cap_j I_j \subset R$.
Furthermore, since $b \in R \setminus I_i$, we have $b \not \in \mathfrak{q} \cap R$ by (\ref{mi =}).
In particular, $b \not \in \mathfrak{q}$.
Since $a$ and $b$ are not in $\mathfrak{q}$ and $\mathfrak{q}$ is prime, their product $ab$ is not in $\mathfrak{q}$.
Thus,
$$a^{-1} = b(ab)^{-1} \in R_{\mathfrak{q} \cap R} \stackrel{(\textsc{i})}{=} (k+I_i)_{I_i},$$
where (\textsc{i}) holds by Lemma \ref{external}.
Whence $a^{-1} \in (k+I_i)_{I_i}$.
But $a \in I_i$, and thus $a$ is not invertible in $(k+I_i)_{I_i}$, a contradiction.
Therefore $I_i \subseteq \mathfrak{q}$.
\end{proof}

\begin{Lemma} \label{upward}
For each minimal prime $\mathfrak{q} \in \operatorname{Spec}S$ over $\mathfrak{m}_i$ and $j \not = i$, the following hold:
$$R_{\mathfrak{q} \cap R } = R^j_{\mathfrak{q} \cap R^j} = \mathfrak{m}_j (k+I_i)_{I_i} = (k+I_i)_{I_i} \ \ \ \ \text{ and } \ \ \ \ \mathfrak{m}_j S_{\mathfrak{q}} = S_{\mathfrak{q}}.$$
Furthermore, if either $I_i$ is prime in $S$ or $n = 1$, then
\begin{equation*} \label{n1}
R^i_{\mathfrak{q} \cap R^i} = S_{\mathfrak{q}} \ \ \ \ \text{ and } \ \ \ \ \mathfrak{m}_iS_{\mathfrak{q}} = \mathfrak{q}S_{\mathfrak{q}}.
\end{equation*}
\end{Lemma}

\begin{proof}
(i) By Lemma \ref{external}, we have $R_{\mathfrak{q} \cap R } = (k+I_i)_{I_i}$, and for $j \not = i$, $R^j_{\mathfrak{q} \cap R^j} = (k+I_i)_{I_i}$.

(ii) Let $j \not = i$.
We claim that $\mathfrak{m}_j(k+I_i)_{I_i} = (k+I_i)_{I_i}$.
Fix $b \in \mathfrak{m}_j \setminus I_i$.
Then $b \in k+I_i$ since $b \in R$.
Whence, $b^{-1} \in (k+I_i)_{I_i}$.
Therefore
$$1 = bb^{-1} \in \mathfrak{m}_j (k+I_i)_{I_i}.$$

(iii) Let $j \not = i$.
We claim that $\mathfrak{m}_jS_{\mathfrak{q}} = S_{\mathfrak{q}}$.
By Lemma \ref{Andy}, there is some
$$b \in (I_j \cap R) \setminus I_i = \mathfrak{m}_j \setminus \mathfrak{m}_i.$$
Whence, $b \not \in \mathfrak{q}$ by (\ref{mi =}).
Therefore
$$1 = bb^{-1} \in \mathfrak{m}_j S_{\mathfrak{q}}.$$

(iv) Suppose $I_i$ is prime in $S$.
We claim that $R^i_{\mathfrak{q}\cap R^i} = S_{\mathfrak{q}}$.
Clearly, $R^i_{\mathfrak{q} \cap R^i} \subseteq S_{\mathfrak{q}}$.

To show the reverse inclusion, suppose $\frac ab \in S_{\mathfrak{q}}$ with $a \in S$ and $b \in S \setminus \mathfrak{q}$.
By Lemma \ref{Andy}, there is some $c \in \left( \cap_{j \not = i} I_j \right) \setminus I_i$.
Thus, $ac$ and $bc$ are in $\cap_{j \not = i}I_j \subset R^i$.
Furthermore, $c \not \in \mathfrak{q}$ since $\mathfrak{q} = I_i$, by Lemma \ref{q = I}.
Whence $bc \not \in \mathfrak{q}$ since $\mathfrak{q}$ is prime.
Therefore
$$\frac ab = \frac{ac}{bc} \in R^i_{\mathfrak{q} \cap R^i},$$
proving our claim.

(v) Again suppose $I_i$ is prime in $S$.
We claim that $\mathfrak{m}_iS_{\mathfrak{q}} = \mathfrak{q}S_{\mathfrak{q}}$.
Clearly, $\mathfrak{m}_iS_{\mathfrak{q}} \subseteq \mathfrak{q}S_{\mathfrak{q}}$.

To show the reverse inclusion, let $a \in \mathfrak{q} = I_i$.
Fix $b \in \cap_{j \not = i}I_j \setminus I_i$.
Then
$$ab \in \cap_j I_j \subset R.$$
Whence, $ab \in I_i \cap R = \mathfrak{m}_i$.
Furthermore, $b \in S \setminus \mathfrak{q}$ since $\mathfrak{q} = I_i$.
Therefore
$$a = abb^{-1} \in \mathfrak{m}_iS_{\mathfrak{q}}.$$

(vi) Finally, suppose $n = 1$, in which case $\mathfrak{m} = I$ (we omit the subscript $i$).
We claim that $IS_{\mathfrak{q}} = \mathfrak{q}S_{\mathfrak{q}}$.
The inclusion $IS_{\mathfrak{q}} \subseteq \mathfrak{q}S_{\mathfrak{q}}$ follows from Lemma \ref{q = I}.

To show the reverse inclusion, let $a \in \mathfrak{q}$.
Consider the set of minimal primes over $I$,
$$\mathfrak{q}_1 := \mathfrak{q}, \mathfrak{q}_2, \ldots, \mathfrak{q}_m \in \operatorname{Spec}S.$$
In particular, $I = \cap_j \mathfrak{q}_j$ since $I$ is radical.

For each $2 \leq j \leq m$, fix $b_j \in \mathfrak{q}_j \setminus \mathfrak{q}$.
Then $b_2 \cdots b_m \in S \setminus \mathfrak{q}$ since $\mathfrak{q}$ is prime.
Therefore
$$a = (ab_2\cdots b_m )(b_2 \cdots b_m)^{-1} \in \left(\cap_j \mathfrak{q}_j \right) S_{\mathfrak{q}} = I S_{\mathfrak{q}}.$$
\end{proof}

Set
$$\tilde{R} := (k+I_i)_{I_i} + \mathfrak{q}S_{\mathfrak{q}}.$$
If $I_i$ is prime in $S$, then by Lemma \ref{q = I} this reduces to
$$\tilde{R} = (k+ \mathfrak{q})_{\mathfrak{q}} + \mathfrak{q}S_{\mathfrak{q}}.$$

\begin{Lemma} \label{wonder}
$\tilde{R}$ is a subalgebra of $S_{\mathfrak{q}}$.
\end{Lemma}

\begin{proof}
Since $k+I_i \subset S$, it suffices to show that if $a$ is invertible in $(k+I_i)_{I_i}$, then $a$ is also invertible in $S_{\mathfrak{q}}$.
So suppose $a \in (k+I_i) \setminus I_i$.
Then $a = c + \alpha$, where $c \in k^{\times}$ and $\alpha \in I_i$.
But $I_i \subseteq \mathfrak{q}$ by Lemma \ref{q = I}.
Whence $a \in S \setminus \mathfrak{q}$.
\end{proof}

Index the rows and columns of $A_{\mathfrak{q}}$ by $0,1, \ldots, n$.
Denote by $e_{ij} \in M_{n+1}(\operatorname{Frac}S)$ the matrix with a $1$ in the $ij$-th slot and zeros elsewhere, and set $e_i := e_{ii}$.

\begin{Proposition} \label{guesshow}
Suppose each $I_i \subset S$ is prime, or $n = 1$.
Fix $\mathfrak{p} \in \operatorname{Spec}R$, and let $\mathfrak{q} \in \operatorname{Spec}S$ be a minimal prime over $\mathfrak{p}$.
\begin{enumerate}
 \item If $R_{\mathfrak{p}}$ is noetherian, then the cyclic localization $A_{\mathfrak{q}}$ at $\mathfrak{q}$ is the full matrix ring
$$A_{\mathfrak{q}} = M_{n+1}(R_{\mathfrak{p}}) \cong A \otimes_R R_{\mathfrak{p}}.$$
 \item If $R_{\mathfrak{p}}$ is nonnoetherian, then $\mathfrak{p} = \mathfrak{m}_i$ for some $i$, and
\begin{equation*} \label{Aq}
A_{\mathfrak{q}} = \begin{bmatrix}
\tilde{R} & \cdots & \tilde{R} & \mathfrak{q}S_{\mathfrak{q}} & \tilde{R} & \cdots & \tilde{R} \\
\vdots & & \vdots & \vdots & \vdots & & \vdots \\
\tilde{R} & \cdots & \tilde{R} & \mathfrak{q}S_{\mathfrak{q}} & \tilde{R} & \cdots & \tilde{R} \\
S_{\mathfrak{q}} & \cdots & S_{\mathfrak{q}} & S_{\mathfrak{q}} & S_{\mathfrak{q}} & \cdots & S_{\mathfrak{q}}\\
\tilde{R} & \cdots & \tilde{R} & \mathfrak{q}S_{\mathfrak{q}} & \tilde{R} & \cdots & \tilde{R} \\
\vdots & & \vdots & \vdots & \vdots & & \vdots \\
\tilde{R} & \cdots & \tilde{R} & \mathfrak{q}S_{\mathfrak{q}} & \tilde{R} & \cdots & \tilde{R}
\end{bmatrix} \subset M_{n+1}(\operatorname{Frac}S)
\end{equation*}
where the $i$th row and column are respectively
\begin{equation*}
\begin{split}
e_iA_{\mathfrak{q}} & = [ \begin{array}{ccc} S_{\mathfrak{q}} & \cdots & S_{\mathfrak{q}} \end{array} ] = S_{\mathfrak{q}}^{\oplus n +1},\\
A_{\mathfrak{q}}e_i & = [ \begin{array}{ccccccc} \mathfrak{q}S_{\mathfrak{q}} & \cdots & \mathfrak{q}S_{\mathfrak{q}} & S_{\mathfrak{q}} & \mathfrak{q}S_{\mathfrak{q}} & \cdots & \mathfrak{q}S_{\mathfrak{q}} \end{array} ]^{\operatorname{t}},
\end{split}
\end{equation*}
and all other entries are $\tilde{R}$.
\end{enumerate}
\end{Proposition}

\begin{proof}
(1) Suppose $R_{\mathfrak{p}}$ is noetherian.
Then $R_{\mathfrak{p}} = S_{\mathfrak{q}}$ since $S$ is a depiction of $R$. 

(1.i) We first claim that the diagonal entries of $A_{\mathfrak{q}}$ are all $S_{\mathfrak{q}}$.
Fix $i \in [1,n]$.
We have 
$$S_{\mathfrak{q}} \stackrel{\textsc{(i)}}{=} R_{\mathfrak{q} \cap R} \stackrel{\textsc{(ii)}}{\subseteq} R^i_{\mathfrak{q} \cap R^i} \stackrel{\textsc{(iii)}}{\subseteq} S_{\mathfrak{q}},$$
where (\textsc{i}) holds since $S$ is a depiction of $R$; (\textsc{ii}) holds since $R \subset R^i$; and (\textsc{iii}) holds since $R^i \subseteq S$.
Therefore $R^i_{\mathfrak{q} \cap R^i} = S_{\mathfrak{q}}$.

(1.ii) We now claim that the off-diagonal entries of $A_{\mathfrak{q}}$ are also all $S_{\mathfrak{q}}$.

Fix $i \in [1,n]$, and assume to the contrary that $\mathfrak{m}_i \subseteq \mathfrak{q}$.
Then 
$$\mathfrak{m}_i = \mathfrak{m}_i \cap R \subseteq \mathfrak{q} \cap R = \mathfrak{p}.$$
Whence, $\mathfrak{m}_i = \mathfrak{p}$ since $\mathfrak{m}_i$ is maximal. 
But $R_{\mathfrak{m}_i}$ is nonnoetherian by Theorem \ref{main'} and Proposition \ref{dim SI2}, contrary to our choice of $\mathfrak{p}$.
Thus $\mathfrak{m}_i \not \subseteq \mathfrak{q}$.
Hence, there is some $a \in \mathfrak{m}_i \setminus \mathfrak{q}$.
Consequently, $1 = aa^{-1} \in \mathfrak{m}_i S_{\mathfrak{q}}$.
Therefore $\mathfrak{m}_iS_{\mathfrak{q}} = S_{\mathfrak{q}}$.
Together with (1.i), this implies that the off-diagonal entries of $A_{\mathfrak{q}}$ in columns $1, \ldots, n$ are $S_{\mathfrak{q}}$.

Finally, the off-diagonal entries in column $0$ are also $S_{\mathfrak{q}}$: since $1 \in R^i$ and $R^i \subseteq S$, we have $R^iS_{\mathfrak{q}} = S_{\mathfrak{q}}$.

(2) Follows from Lemmas \ref{upward} and \ref{wonder}.
\end{proof}

Fix $i \in [1,n]$ and a minimal prime $\mathfrak{q} \in \operatorname{Spec}S$ over $\mathfrak{m}_i$.

\begin{Lemma} \label{kernel}
Let $j \in [0,n]$, let $P$ be a projective $A_{\mathfrak{q}}$-module, and let
$$\delta: A_{\mathfrak{q}}e_j \to P$$
be an $A_{\mathfrak{q}}$-module homomorphism.
Suppose $e_{ij} \in A_{\mathfrak{q}}$.
If $\delta(e_{ij}) = 0$, then $\delta \equiv 0$.
\end{Lemma}

\begin{proof}
Set $\Lambda := A_{\mathfrak{q}}$, and suppose $\delta(e_{ij}) = 0$.
Let $\ell \geq 1$ be minimal such that $P$ is a direct summand of $\Lambda^{\oplus \ell}$.
Let $a_1, \ldots, a_{\ell} \in \Lambda$ be such that
$$\delta(e_j) = (a_1, \ldots, a_{\ell}) \in \Lambda^{\oplus \ell}.$$

Each $a_k$ is in $e_j\Lambda$ since
$$(a_1, \ldots, a_{\ell}) = \delta(e_j) = \delta(e_j^2) = e_j \delta(e_j) \in e_j\Lambda^{\oplus \ell}.$$
Furthermore, each product $e_{ij}a_k$ is zero since
$$(e_{ij}a_1, \ldots, e_{ij}a_{\ell}) = e_{ij}(a_1, \ldots, a_{\ell}) = e_{ij} \delta(e_j) = \delta(e_{ij}) = 0.$$
But $e_{ij} \alpha \not = 0$ for all nonzero $\alpha$ in $e_j\Lambda$. 
Therefore each $a_k$ is zero.
\end{proof}

\begin{Proposition} \label{leq dim}
The left global dimension of $A_{\mathfrak{q}}$ is bounded above by the Krull dimension of $S_{\mathfrak{q}}$,
$$\operatorname{gldim}A_{\mathfrak{q}} \leq \operatorname{dim}S_{\mathfrak{q}}.$$
\end{Proposition}

\begin{proof}
Set $\Lambda := A_{\mathfrak{q}}$ and $d := \operatorname{dim}S_{\mathfrak{q}} - 1$.
Let $V$ be a $\Lambda$-module.
We claim that
\begin{equation} \label{d+1}
\operatorname{pd}_{\Lambda}(V) \leq d+1.
\end{equation}
It suffices to show that there is a projective resolution $P_{\bullet}$ of $V$,
\begin{equation*} \label{resolution}
\cdots \longrightarrow P_2 \stackrel{\delta_2}{\longrightarrow} P_1 \stackrel{\delta_1}{\longrightarrow} P_0 \stackrel{\delta_0}{\longrightarrow} V \to 0,
\end{equation*}
for which the $(d+1)$th syzygy $\operatorname{ker}\delta_d$ is a projective $\Lambda$-module \cite[Proposition 8.6.iv]{R}.

Since $\{e_0, \ldots, e_{n}\}$ is a complete set of orthogonal idempotents of $\Lambda$, we may assume that for each $\ell \geq 0$ and $j \in [0,n]$, there is some $m_{\ell j} \geq 0$ such that
\begin{equation*} \label{Pl}
P_{\ell} = \bigoplus_{j \, : \, m_{\ell j} \geq 1}(\Lambda e_j)^{\oplus m_{\ell j}} = \bigoplus_{j \, : \, m_{\ell j} \geq 1} \, \bigoplus_{t \in [1,m_{\ell j}]} \Lambda e_j \varepsilon_t, 
\end{equation*}
where $e_j \varepsilon_t$ generates the $t$-th $\Lambda e_j$ summand of $(\Lambda e_j )^{\oplus m_{\ell j}}$ over $\Lambda$.

Now $e_i\Lambda$ is a left $S_{\mathfrak{q}}$-module since $e_i\Lambda e_i = e_iS_{\mathfrak{q}}$.
Furthermore, $e_i\Lambda$ is a projective, hence flat, right $\Lambda$-module.
Thus, setting $\otimes := \otimes_{\Lambda}$, the sequence of $S_{\mathfrak{q}}$-modules
$$\ldots \longrightarrow e_i\Lambda \otimes P_2 \stackrel{1 \otimes \delta_2}{\longrightarrow} e_i\Lambda \otimes P_1 \stackrel{1 \otimes \delta_1}{\longrightarrow} e_i\Lambda \otimes P_0 \stackrel{1 \otimes \delta_0}{\longrightarrow} e_i\Lambda \otimes V \to 0$$
is exact.
Moreover, each term $e_i\Lambda \otimes P_{\ell}$ is a free $S_{\mathfrak{q}}$-module since
\begin{equation} \label{rad dude}
e_i\Lambda \otimes P_{\ell} = \bigoplus_{j \, : \, m_{\ell j} \geq 1} (e_i\Lambda \otimes \Lambda e_j)^{\oplus m_{\ell j}} \cong \bigoplus_j (e_i\Lambda e_j)^{\oplus m_{\ell j}} = \bigoplus_j (e_{ij} S_{\mathfrak{q}})^{\oplus m_{\ell j}}.
\end{equation}
It follows that $e_i\Lambda \otimes P_{\bullet}$ is a free resolution of the $S_{\mathfrak{q}}$-module $e_i\Lambda \otimes V \cong e_iV$.
Thus, since $S_{\mathfrak{q}}$ is a regular local ring of dimension $d+1$, the $(d+1)$th syzygy $\operatorname{ker}(1 \otimes \delta_d)$ of $e_i\Lambda \otimes P_{\bullet}$ is a free $S_{\mathfrak{q}}$-module.
Therefore, since $\operatorname{ker}(1 \otimes \delta_d)$ is a submodule of $\bigoplus_j e_{ij}S_{\mathfrak{q}}^{\oplus m_{d j}}$, for each $j \in [0,n]$ there is some $r_j \in [0, m_{d j}]$ such that
\begin{equation} \label{wow}
\operatorname{ker}(1 \otimes \delta_d) \cong \bigoplus_{j \, : \, r_j \geq 1} (e_{ij}S_{\mathfrak{q}})^{\oplus r_j}.
\end{equation}

Again since $e_i\Lambda$ is a flat right $\Lambda$-module, the sequence
$$0 \longrightarrow e_i\Lambda \otimes \operatorname{ker}\delta_d \longrightarrow e_i\Lambda \otimes P_d \stackrel{1 \otimes \delta_d}{\longrightarrow} e_i\Lambda \otimes P_{d-1}$$
is exact.
Whence
\begin{equation} \label{wow2}
e_i\Lambda \otimes \operatorname{ker}\delta_d = \operatorname{ker}(1 \otimes \delta_d).
\end{equation}
But (\ref{wow}) and (\ref{wow2}) together imply
\begin{equation} \label{life}
e_i \operatorname{ker}\delta_d = \bigoplus_{j \, : \, r_j \geq 1} (e_{ij}S_{\mathfrak{q}})^{\oplus r_j} = e_i \bigoplus_j (\Lambda e_j)^{\oplus r_j} = e_i \bigoplus_{j \, : \, r_j \geq 1} \, \bigoplus_{t \in [1,r_j]} \Lambda e_j \varepsilon_t.
\end{equation}
In particular, for each $t \in [1,r_j]$ we have $\delta_d(e_{ij} \varepsilon_{t}) = 0$.
Thus by Lemma \ref{kernel},
$$\delta_d(\Lambda e_j\varepsilon_t) = 0.$$
Therefore
\begin{equation} \label{a}
\operatorname{ker}\delta_d \supseteq \bigoplus_{j \, : \, r_j \geq 1} \, \bigoplus_{t \in [1,r_j]} \Lambda e_j \varepsilon_t.
\end{equation}

To show the reverse inclusion, fix $j \in [0,n]$ satisfying $m_{d j} \geq 1$, and let $t \in [1,m_{d j}]$.
Suppose $e_{kj}\varepsilon_t \in \operatorname{ker}\delta_d$.
Then, since $1 \in \Lambda^{ik}$ for each $k \in [0,n]$, we have
$$\delta_d(e_{ij}\varepsilon_t) = \delta_d(e_{ik}e_{kj} \varepsilon_t) = e_{ik}\delta_d(e_{kj}\varepsilon_t) = 0.$$
Whence $e_{ij}\varepsilon_t \in e_i \operatorname{ker} \delta_d$. 
Thus $t \in [1,r_j]$, by (\ref{life}).
Therefore
\begin{equation} \label{b}
\operatorname{ker}\delta_d \subseteq \bigoplus_{j \, : \, r_j \geq 1} \, \bigoplus_{t \in [1,r_j]} \Lambda e_j \varepsilon_t.
\end{equation}
(\ref{a}) and (\ref{b}) together imply that $\operatorname{ker}\delta_d = \bigoplus (\Lambda e_j)^{\oplus r_j}$.
Consequently, (\ref{d+1}) holds.
\end{proof}

\begin{Proposition} \label{simple prop}
The cyclic localization $A_{\mathfrak{q}}$ has precisely two simple modules up to isomorphism,
\begin{equation} \label{simples}
\begin{split}
V & := A_{\mathfrak{q}}e_i/A_{\mathfrak{q}}(1-e_i)A_{\mathfrak{q}}e_i \cong S_{\mathfrak{q}}/\mathfrak{q}, \\
W & := A_{\mathfrak{q}}e_0/A_{\mathfrak{q}}e_iA_{\mathfrak{q}}e_0 \cong \bigoplus_{j \not = i} ke_{j0} \cong (\tilde{R}/\mathfrak{q})^{\oplus n}.
\end{split}
\end{equation}
Their projective dimensions are respectively
$$\operatorname{pd}_{A_{\mathfrak{q}}}(V) = \operatorname{dim}S_{\mathfrak{q}} \ \ \ \text{ and } \ \ \ \operatorname{pd}_{A_{\mathfrak{q}}}(W) = 1.$$
\end{Proposition}

\begin{proof}
Set $\Lambda := A_{\mathfrak{q}}$.

(i) We claim that the simple $\Lambda$-modules are precisely the modules $V$ and $W$ in (\ref{simples}).
For each $j \in [0,n] \setminus \{i\}$, there is a (left) $\Lambda$-module isomorphism
$$\Lambda e_0 \stackrel{\cdot e_{0j}}{\longrightarrow} \Lambda e_j.$$
Furthermore, $W$ is simple since for each $j,k \in [0,n] \setminus \{i \}$, the matrix entry $A^{jk}$ contains $1 \in R$; whence 
$$e_{jk}e_{k0} = e_{j0} \ \ \ \text{ and } \ \ \ e_{kj}e_{j0} = e_{k0}.$$

(ii) We claim that $\operatorname{pd}_{\Lambda}(V) = \operatorname{dim}S_{\mathfrak{q}}$.
Indeed, we have
$$\operatorname{dim}S_{\mathfrak{q}} \stackrel{\textsc{(i)}}{=} \operatorname{pd}_{S_{\mathfrak{q}}}(S_{\mathfrak{q}}/\mathfrak{q}) \stackrel{\textsc{(ii)}}{\leq} \operatorname{pd}_{\Lambda}(V) \stackrel{\textsc{(iii)}}{\leq} \operatorname{dim}S_{\mathfrak{q}},$$
where (\textsc{i}) holds since $S_{\mathfrak{q}}$ is a regular local ring, and (\textsc{iii}) holds by Proposition \ref{leq dim}.
(\textsc{ii}) holds since if $P_{\bullet}$ is a projective resolution of $V$ over $\Lambda$, then $e_i\Lambda \otimes_{\Lambda} P_{\bullet}$ is a free resolution of $V \cong S_{\mathfrak{q}}/\mathfrak{q}$ over $e_i\Lambda e_i \cong S_{\mathfrak{q}}$, as shown in (\ref{rad dude}).

(iii) We claim that $\operatorname{pd}_{\Lambda}(W) = 1$.
Consider the complex
\begin{equation} \label{complex}
0 \to \Lambda e_i \stackrel{\cdot e_{i0}}{\longrightarrow} \Lambda e_0 \longrightarrow W \to 0.
\end{equation}
The module homomorphism $\Lambda e_i \stackrel{\cdot e_{i0}}{\longrightarrow} \Lambda e_0$ maps onto the kernel of $\Lambda e_0 \to W$, namely $\Lambda e_i \Lambda e_0$, since $\Lambda^{ii} = S_{\mathfrak{q}} = \Lambda^{i0}$.
Thus the complex (\ref{complex}) is exact.
\end{proof}

\begin{Proposition} \label{localized}
The residue module at $\mathfrak{q}$ decomposes as
\begin{equation} \label{residue module}
A_{\mathfrak{q}}/\mathfrak{q} = V \oplus W^{\oplus n},
\end{equation}
and has projective dimension
\begin{equation*} \label{pd of residue module}
\operatorname{pd}_{A_{\mathfrak{q}}}(A_{\mathfrak{q}}/\mathfrak{q}) = \operatorname{dim} S_{\mathfrak{q}}.
\end{equation*}
\end{Proposition}

\begin{proof}
Set $\Lambda := A_{\mathfrak{q}}$.

The direct sum decomposition (\ref{residue module}) follows from Proposition \ref{simple prop}, where we view $V$ and $W$ as columns of the $(n+1) \times (n+1)$ tiled matrix ring $\Lambda/(\mathfrak{q} \cap \Lambda)$.

The projective dimension of $\Lambda/(\mathfrak{q} \cap \Lambda)$ equals the Krull dimension of $S_{\mathfrak{q}}$ since $$\operatorname{dim}S_{\mathfrak{q}} \stackrel{\textsc{(i)}}{=} \operatorname{pd}_{\Lambda}(V) \stackrel{\textsc{(ii)}}{\leq} \operatorname{pd}_{\Lambda}(\Lambda/(\mathfrak{q} \cap \Lambda)) \leq \operatorname{gldim}\Lambda \stackrel{\textsc{(iii)}}{\leq} \operatorname{dim}S_{\mathfrak{q}}.$$
Indeed, (\textsc{i}) holds by Proposition \ref{simple prop}; (\textsc{ii}) holds by (\ref{residue module}), since the projective dimension of a module $M$ is greater than or equal to the projective dimension of any direct summand of $M$; and (\textsc{iii}) holds by Proposition \ref{leq dim}.
\end{proof}

Let $A$ be a $k$-algebra with prime center $Z$, and let $\mathfrak{m} \in \operatorname{Max}Z$.
Then $A_{\mathfrak{m}} = A \otimes_Z Z_{\mathfrak{m}}$ is said to be Azumaya over its center $Z_{\mathfrak{m}}$ if $A_{\mathfrak{m}}$ is a free $Z_{\mathfrak{m}}$-module of finite rank, and the algebra homomorphism
\begin{equation*} 
\begin{array}{ccc}
A_{\mathfrak{m}} \otimes_{Z_{\mathfrak{m}}} A_{\mathfrak{m}}^{\operatorname{op}} & \to & \operatorname{End}_{Z_{\mathfrak{m}}}(A_{\mathfrak{m}})\\
a \otimes b & \mapsto & (x \mapsto axb)
\end{array}
\end{equation*}
is an isomorphism \cite[13.7.6]{McR}, \cite[III.1.3]{BG}.
The Azumaya locus of $A$ is the set of points $\mathfrak{m} \in \operatorname{Max}Z$ for which $A_{\mathfrak{m}}$ is an Azumaya algebra.  

\begin{Remark} \label{ily} \rm{
It is well-known that if $A_{\mathfrak{m}}$ is free of finite rank over $Z_{\mathfrak{m}}$, then $A_{\mathfrak{m}}$ is Azumaya if and only if $A_{\mathfrak{m}}/\mathfrak{m}A_{\mathfrak{m}}$ is a central simple algebra over $k$, if and only if $A_{\mathfrak{m}}/\mathfrak{m}A_{\mathfrak{m}} \cong M_n(k)$ for some $n \geq 1$ (assuming $k$ is algebraically closed).
}\end{Remark}

\begin{Theorem} \label{akzily}
Let $S$ be a finite type normal integral domain. 
Let $I_1, \ldots, I_n$ be a set of proper non-maximal nonzero radical ideals of $S$ such that either $n = 1$, or the closed sets $\mathcal{Z}(I_i) \subset \operatorname{Max}S$ are irreducible and pairwise non-intersecting.

Set $R := \cap_{i=1}^n (k + I_i)$, and consider its nonnoetherian points $\mathfrak{m}_i := I_i \cap R \in \operatorname{Max}R$. 
Then
\begin{enumerate}
 \item $S$ can be retrieved from $R$ as the cycle algebra of the endomorphism ring 
$$A = \operatorname{End}_R\left(R \oplus \mathfrak{m}_1 \oplus \cdots \oplus \mathfrak{m}_n \right).$$ 
Furthermore, the center of $A$ is $R$.
 \item The Azumaya locus of $A$ and the noetherian locus $U_{S/R}$ of $R$ coincide.
 \item If each $\mathcal{Z}(I_i)$ intersects the smooth locus of $\operatorname{Max}S$,  then $A$ is a noncommutative desingularization of its center $R$: 
\begin{enumerate}
 \item $\operatorname{Frac}R$ and $A \otimes_R \operatorname{Frac}R$ are Morita equivalent, and
 \item for each $i \in [1,n]$ and minimal prime $\mathfrak{q} \in \operatorname{Spec}S$ over $\mathfrak{m}_i$, we have
$$\operatorname{gldim}A_{\mathfrak{q}} = \operatorname{pd}_{A_{\mathfrak{q}}}(A_{\mathfrak{q}}/\mathfrak{q}) = \operatorname{dim}S_{\mathfrak{q}}.$$
\end{enumerate} 
\end{enumerate}
\end{Theorem}

\begin{proof}
(1) The algebra $A$ has center $R$ by Proposition \ref{tiled}, and cycle algebra $S$ by Proposition \ref{get S}.

(2) The noetherian locus $U_{S/R}$ of $R$ is contained in the Azumaya locus of $A$ by Proposition \ref{guesshow}.1 and Remark \ref{ily}.
Conversely, if $\mathfrak{n} \in \operatorname{Max}S \setminus U_{S/R}$, then $A \otimes_R R_{\mathfrak{n} \cap R}$ is not a free $R_{\mathfrak{n} \cap R}$-module by Proposition \ref{tiled}.
Whence $\mathfrak{n} \cap R \in \operatorname{Max}R$ is not in the Azumaya locus of $A$.

(3.a) We claim that $\operatorname{Frac}R$ and $A \otimes_R \operatorname{Frac}R$ are Morita equivalent.
By Theorem \ref{main'}, $S$ is a depiction of $R$; in particular, $U_{S/R} \not = \emptyset$.
Thus $\operatorname{Frac}R = \operatorname{Frac}S$.
Therefore
$$A \otimes_R \operatorname{Frac}R = A \otimes_R \operatorname{Frac}S \stackrel{\textsc{(i)}}{=} M_{n+1}(\operatorname{Frac}S) = M_{n+1}(\operatorname{Frac}R),$$
where (\textsc{i}) holds by Proposition \ref{tiled}.
The claim follows.

(3.b) We have $\operatorname{gldim}A_{\mathfrak{q}} \leq \operatorname{dim}S_{\mathfrak{q}}$ by Proposition \ref{leq dim}, and $\operatorname{gldim}A_{\mathfrak{q}} \geq \operatorname{dim}S_{\mathfrak{q}}$ by Proposition \ref{simple prop}.
Furthermore, $\operatorname{pd}_{A_{\mathfrak{q}}}(A_{\mathfrak{q}}/\mathfrak{q}) = \operatorname{dim}S_{\mathfrak{q}}$ by Proposition \ref{localized}.
\end{proof}

\begin{Remark} \rm{
The advantage of the noncommutative blowup $A$ over the depiction $S$ is given by Theorem \ref{akzily}.2: the noetherian locus $U_{S/R}$ of $R$ is intrinsic to $A$ since it is encoded in the representation theory of $A$, whereas the noetherian locus is invisible to $S$. 
Furthermore, $A$ `sees' both $R$ and $S$: they appear as the center and cycle algebra of $A$ respectively. 
}\end{Remark}

\ \\
\textbf{Acknowledgments.}
The author was supported by the Austrian Science Fund (FWF) grant P 30549-N26.
Part of this article is based on work supported by Heilbronn Institute for Mathematical Research.

\bibliographystyle{hep}
\def\cprime{$'$} \def\cprime{$'$}

\end{document}